\documentclass[12pt]{amsart}

\def\eps{{\varepsilon}}

\def\Const{{\rm Const}}

\def\Prob{{\mathbb{P}}}

\def\bbA{\mathbb{A}}
\def\bbB{\mathbb{B}}

\def\reals{\mathbb{R}}

\def\RmII{{I\!\!I}}

\def\RmIII{{I\!\!I\!\!I}}

\def\ba{\mathbf{a}}

\def\bc{\mathbf{c}}

\def\bj{\mathbf{j}}

\def\brC{{\bar C}}

\def\brD{{\bar D}}

\def\brc{{\bar c}}

\def\brh{{\bar h}}

\def\breps{{\bar\eps}}

\def\cA{\mathcal{A}}

\def\cB{\mathcal{B}}

\def\cC{\mathcal{C}}

\def\cD{\mathcal{D}}

\def\cF{\mathcal{F}}

\def\cL{\mathcal{L}}

\def\cN{\mathcal{N}}

\def\cR{\mathcal{R}}

\def\cX{\mathcal{X}}

\def\fL{\mathfrak{L}}

\def\fR{\mathfrak{R}}

\def\hD{{\hat D}}

\def\hH{{\hat H}}

\def\hI{{\hat I}}

\def\hc{{\hat c}}

\def\hOmega{{\hat\Omega}}

\def\tA{{\tilde A}}

\def\tcA{{\tilde \cA}}

\def\tB{{\tilde B}}

\def\tD{{\tilde D}}

\def\tg{{\tilde g}}

\def\tnu{{\tilde\nu}}

\def\tOmega{{\tilde{\Omega}}}

\def\tcB{{\tilde{\mathcal{B}}}}


\makeatother

\def\beq{\begin{equation}}
\def\eeq{\end{equation}}

\usepackage[usenames,dvipsnames,svgnames,table]{xcolor}
\newtheorem{theorem}{Theorem}[section]
\newtheorem{proposition}[theorem]{Proposition}
\newtheorem{lemma}[theorem]{Lemma}

\theoremstyle{remark}
\newtheorem{remark}[theorem]{Remark}
\theoremstyle{definition}
\newtheorem{defn}[theorem]{Definition}

\numberwithin{equation}{section}

\author{Dmitry Dolgopyat and P\'eter N\'andori}
\title{Infinite measure renewal theorem and related results.}

\thanks{
The authors thank Ian Melbourne and Dalia Terhesiu for posing this problem to them and for useful
discussions about the renewal theory. They also thank the hospitality of 
Schr\"odinger Institute, Vienna, where this project was started.
The first author was supported by the NSF}

\begin{document}
\maketitle
\section{Introduction}
\label{sec:res}

Mixing plays a central role in studying statistical properties of transformations preserving a probability measure. 
For transformations preserving an infinite measure, mixing is much less understood. In fact, there are
several different generalizations of mixing to infinite measure setting \cite{L1}.  One natural definition is
to require that for a large collection of (nice) sets of finite measure, the probability that the orbit
is in this set at a given time $t$ is asymptotically independent of the initial distribution. This type of mixing
is sometimes called {\em Krickeberg mixing} since it has been studied for Markov chains in \cite{Kr} (other early works on this subject include \cite{Fr, H, Pa}).
This notion of mixing is related to classical renewal theory (\cite{GL}) and to limit distributions
of ergodic sums of infinite measure preserving transformations \cite{DSV}. 
Recently, there was a considerable interest in studying mixing properties of hyperbolic
transformations preserving an infinite measure in both discrete and continuous time settings
(see \cite{AN, BT, G, LT, Mel-I, MT1, MT2, MT3, T1} and references wherein). 

The goal of this note is to describe
a method of deducing mixing for flows from local limit results for  the first return map to an appropriate section. This  approach goes back to \cite{GL} in the independent setting, and in dynamical setting it was
pursued in \cite{AN, DN1, DN2}.  The plan of the paper is the following. In Section \ref{ScAbs}, we explain how to obtain mixing for flows from the local limit theorem and appropriate large deviation bounds
for a section. Section \ref{ScBack} contains tools which are helpful in verifying the abstract conditions
of Section \ref{ScAbs} in specific examples. In particular, in Theorem \ref{ThLD} we obtain sharp large
deviation bounds for quasi-independent random variables.
The last two sections contain specific examples where our assumptions hold. Section \ref{ScInd} is 
devoted to independent random variables. The results of this section are not new but we 
included this example since it allows us to illustrate our approach in the simplest possible setting. 
In particular, it is known since the work of Garcia-Lamperti (\cite{GL}) that in the independent 
case the regular variation of the return time with index $\alpha$ is sufficient for mixing if 
$\alpha>\frac{1}{2}$ but extra assumptions are needed if $\alpha\leq \frac{1}{2}.$ We will present in Section \ref{ScInd} a simple argument to verify our key assumptions 
\eqref{Beta1-2}, \eqref{LLTUpper} for $\alpha>\frac{1}{2},$
and we will see that a more delicate estimate \eqref{EqLLTLDDensity} is required  in the general case.
In Section \ref{sec:LSV}, we show how to verify our assumptions for suspension flows over the
Liverani-Saussol-Vaienti map studied in \cite{LSV}. 

While there is a number of papers dealing with mixing of infinite measure preserving flows 
(see the references at the beginning of the introduction), our approach is more elementary 
than most of the previous works. In particular, we pay a special attention to isolate the key
geometric (quasi-independence) and probabilistic (anticoncetration, exchangebility) ingredients
needed in our method. This could make our method useful also for studying more complicated systems.

\section{Abstract setting}
\label{ScAbs}
\subsection{Results.}
Recall that a function $\fL:\reals^+\to\reals^+$ is called {\em slowly varying} if for each
$h>0,$ $\displaystyle \lim_{t\to\infty} \frac{\fL(ht)}{\fL(t)}=1.$ 
A function $\fR:\reals^+\to\reals^+$ 
which can be represented in the form $\fR(t)=t^\gamma \fL(t)$ with $\fL$ slowly varying is
called {\em regularly varying of index $\gamma.$} Equivalently, for each $h>0,$ 
$\displaystyle \lim_{t\to\infty} \frac{\fR(ht)}{\fR(t)}=h^\gamma.$ 
We refer the reader to \cite{BGT} for
a comprehensive discussion of regularly and slowly varying functions. The properties of these
functions needed in this paper will be recalled in a due time.

Let $f: X\to X$ be a dynamical system preserving a probability
measure $\mu.$ Let $\tau$ be a roof function such that for some slowly varying function $\cL(t)$
\begin{equation}
\label{Tail}
\mu(\tau>t)\sim \frac{\cL(t)}{t^\alpha} \quad \text{for} \quad 0<\alpha<1.
\end{equation}

In particular $\mu(\tau)=\infty.$ Let $g_t:\Omega\to\Omega$
be a suspension flow of $f$ under $\tau.$ It preserves an infinite measure
$\nu$ such that
$$ d\nu(x,s)=d\mu(x) ds. $$
Let $\tau_k(x) = \sum_{i=0}^{k-1} \tau (t^i(x))$.
We are interested in the asymptotics of
$ \nu(\cA\cap g_{-t} \cB) $
for suitable sets $\cA, \cB.$

Recall \cite[\S 1.5.7]{BGT} that there exists a regularly varying 
function $\cR(t)$ of index $\frac{1}{\alpha}$ such that
\begin{equation}
\label{R-T} 
 \lim_{t\to\infty} \frac{t \cL(\cR(t))}{\cR^\alpha(t)}=1. 
\end{equation}
$\cR$ is unique up to asymptotic equivalence: if $\cR_1, \cR_2$ satisfy \eqref{R-T}
then $\displaystyle \lim_{t\to\infty} \frac{\cR_1(t)}{\cR_2(t)}.$ Since all
the results of our paper depend only on asymptotic equivalence class of $\cR$
(that is, the results remain valid if $\cR$ is replaced by an equivalent function)
\cite[\S 1.5.2]{BGT} allows us to assume that $\cR$ is eventually monotone
and we shall do so from now on.  

We start with the following fact.
\begin{proposition}
\label{PrLLT-Ren}
Suppose that for all sets $A,B$ in some algebra $\cX$ we have
\begin{equation}
\label{MixLLT}
\mu\left(x\in A, f^k x\in B, \tau_k(x)\in [t-l, t]\right)\approx
\end{equation}
$$ \brc \rho\left(\frac{t}{\cR(k)}\right) \frac{l}{\cR(k)} \mu(A)\mu(B) $$
uniformly for $t\leq \dfrac{\cR(k)}{\eps}$ where $\rho$ is a bounded
continuous probability density on
$[0, \infty)$ and that there are constants $\beta_1, \beta_2, \beta_3$ such that
\begin{equation}
\label{Beta1-2}
 \beta_2+\frac{\beta_3}{\alpha} <1, \quad \beta_1+\beta_2\alpha+\beta_3=1 
 \end{equation}
and
\begin{equation}
\label{LLTUpper}
\mu\left(\tau_k\in [t, t+l]\right)\leq \frac{C l }{\cL^{\beta_2}(t)  t^{\beta_1} k^{\beta_2}\cR^{\beta_3}(k)} .
\end{equation}
Then for $\cA=A\times [a_1, a_2],$ $\cB=B\times [b_1, b_2]$ we have
\begin{equation}
\label{IMMix}
 \nu(\cA\cap g_{-t} \cB) \cL(t) t^{1-\alpha}\to \hc \mu(A) \mu(B) (a_2-a_1)(b_2-b_1) 
=\hc \nu(\cA) \nu(\cB)
\end{equation}
where
$$ \hc={\brc}{\alpha}  \int_0^\infty \frac{\rho(z)}{z^\alpha} dz. $$
\end{proposition}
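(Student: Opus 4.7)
The plan is to decompose $\nu(\cA\cap g_{-t}\cB)$ as a sum over the number $k$ of base iterates needed to reach $\cB$, apply the mixing LLT \eqref{MixLLT} on the dominant range of $k$, and control the complementary range via the large deviation bound \eqref{LLTUpper}. Using $g_t(x,s)=(f^kx,\,s+t-\tau_k(x))$ with $k$ determined by $\tau_k(x)\le s+t<\tau_{k+1}(x)$ (the upper constraint being automatic whenever $(f^kx,\,s+t-\tau_k(x))\in\cB\subset\Omega$), one has
\begin{equation*}
\nu(\cA\cap g_{-t}\cB)=\sum_{k\ge 0}\int_A \mathbf{1}_{f^k x\in B}\,\Phi(\tau_k(x)-t)\,d\mu(x),
\end{equation*}
where $\Phi(r):=\bigl|[a_1,a_2]\cap[b_1+r,b_2+r]\bigr|$ is a trapezoidal function of total mass $\ell_A\ell_B$ (with $\ell_A:=a_2-a_1$, $\ell_B:=b_2-b_1$) supported on an interval of length $\ell_A+\ell_B$.

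Set $N:=\cR^{-1}(t)$, so by \eqref{R-T} and \eqref{Tail}, $N/t\sim 1/(\cL(t)\,t^{1-\alpha})$. Fix small $\eps>0$ and split at $k=\eps^\alpha N$. For $k\ge\eps^\alpha N$, regular variation of $\cR$ of index $1/\alpha$ gives $\cR(k)\ge c\,\eps\,t$ for large $t$ (some $c>0$), so after a harmless shrinking of $\eps$ the hypothesis $t\le\cR(k)/\eps$ of \eqref{MixLLT} holds. Approximating $\Phi$ by step functions on subintervals of length $\delta\to 0$, applying \eqref{MixLLT} on each, and using continuity of $\rho$ on the $O(1)$-scale support of $\Phi$ produces
\begin{equation*}
\int_A \mathbf{1}_{f^k x\in B}\,\Phi(\tau_k(x)-t)\,d\mu(x)\sim \frac{\brc\,\rho(t/\cR(k))}{\cR(k)}\,\ell_A\,\ell_B\,\mu(A)\,\mu(B).
\end{equation*}
A Riemann approximation with $k=Ny$ (using $\cR(Ny)/\cR(N)\to y^{1/\alpha}$ uniformly on compacts) and then $z=y^{-1/\alpha}$ yields
\begin{equation*}
\sum_{k\ge\eps^\alpha N}\frac{\rho(t/\cR(k))}{\cR(k)}\sim \frac{N}{t}\,\alpha\int_0^{1/\eps}\!\frac{\rho(z)}{z^\alpha}\,dz,
\end{equation*}
which tends to $(N/t)\,\hc/\brc$ as $\eps\to 0$; multiplication by $\brc\,\ell_A\ell_B\,\mu(A)\mu(B)$ produces the main term $\hc\,\nu(\cA)\nu(\cB)\,N/t$.

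For $k<\eps^\alpha N$, bound $\Phi\le\min(\ell_A,\ell_B)\,\mathbf{1}_{[a_1-b_2,a_2-b_1]}$ and apply \eqref{LLTUpper} with $l=\ell_A+\ell_B$ to control the tail contribution by
\begin{equation*}
\frac{C}{\cL^{\beta_2}(t)\,t^{\beta_1}}\sum_{k=1}^{\eps^\alpha N}\frac{1}{k^{\beta_2}\cR^{\beta_3}(k)}.
\end{equation*}
Since $\cR^{\beta_3}(k)\sim k^{\beta_3/\alpha}$ up to a slowly varying factor and $\beta_2+\beta_3/\alpha<1$, the inner sum is of order $(\eps^\alpha N)^{1-\beta_2-\beta_3/\alpha}$; substituting $N\sim t^\alpha/\cL(t)$ and invoking $\beta_1+\alpha\beta_2+\beta_3=1$, all powers of $t$ and (after careful accounting) all slowly varying factors cancel, reducing the tail bound to $C\,\eps^{\alpha(1-\beta_2-\beta_3/\alpha)}\cdot N/t$, which vanishes as $\eps\to 0$. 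Combining the two ranges and sending $\eps\to 0$ after $t\to\infty$ yields \eqref{IMMix}. The main obstacle is this tail estimate: both parts of \eqref{Beta1-2} conspire essentially — the equality aligns the $t$-scale of the LD bound with $N/t$, while the strict inequality simultaneously ensures summability of the reduced series and a strictly positive exponent of $\eps$, so the tail is genuinely subleading.
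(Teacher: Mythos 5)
Your proof is correct and follows essentially the same approach as the paper: decompose $\nu(\cA\cap g_{-t}\cB)$ as a sum over the return index $k$, control the small-$k$ part via \eqref{LLTUpper} and Karamata's theorem (with \eqref{Beta1-2} providing both summability and a positive power of $\eps$), and evaluate the remaining range as a Riemann sum converging to $\alpha\int_0^\infty\rho(z)z^{-\alpha}\,dz$. The only differences are organizational --- the paper uses a three-way split, bounding $k>\cN(t)/\eps$ separately using only the boundedness of $\rho$ rather than folding it into the Riemann sum, and it applies \eqref{MixLLT} directly at the fixed width $b_2-b_1$ for each $a\in[a_1,a_2]$ rather than approximating the trapezoid $\Phi$ by step functions of shrinking width.
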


\begin{remark} 
\label{remark:port}
Let $\Omega_{\leq M} = \{ (x,s) \in \Omega : s \leq M\} .$
Assume that $X$ is a topological space and that $\cX$ is the collection of sets whose boundary has $\nu$-measure
zero. Then one can prove by standard argument (cf. \cite{Bil})
that \eqref{IMMix} is equivalent to 
$$
\nu (\Phi \Psi \circ g_t) \cL(t) t^{1-\alpha}\to \hc \nu (\Phi) \nu( \Psi)
$$
with either of the following two classes of functions:
\begin{itemize}
\item for any continuous functions $\Phi, \Psi: \Omega \to \mathbb R$ supported on $\Omega_{\leq M}$ for some 
$M<\infty;$
\item for any $\Phi = 1_{\mathcal A}$, $\Psi = 1_{\mathcal B}$, where $\mathcal A, \mathcal B \subset \Omega_{\leq M}$
for some 
$M<\infty$
and $\mu(\partial \mathcal A) = \mu(\partial \mathcal B) = 0$.
\end{itemize}
\end{remark}

Assumption \eqref{MixLLT} amounts to the {\em non-lattice} (mixing) local limit theorem.
In fact, the non-lattice assumption is not necessary for mixing of the flow. 
To clarify the situation,
we need some definitions.

\begin{defn}
\label{def:per}
Let $(Y,\lambda,T)$ be a dynamical system. We say that an observable $\varphi: Y \to \mathbb R$ 
is {\it rational} if there is a real number $h$ and two measurable functions 
$\psi: Y \rightarrow \mathbb Z$, $ \mathfrak h:Y \to \mathbb R$ so that
$$
\varphi = h \psi + \mathfrak h - \mathfrak h \circ T.
$$
A function, which is not rational, is  called {\em irrational}.

\smallskip\noindent
We say that $\varphi$ is {\em periodic} if there
exist real numbers $\ba, h$ and two measurable functions 
$\psi: Y \rightarrow \mathbb Z$, $ \mathfrak h:Y \to \mathbb R$ so that
\begin{equation}
\label{EqPeriodic}
\varphi =\ba+ h \psi + \mathfrak h - \mathfrak h \circ T.
\end{equation}
A function, which is not periodic, is called {\em aperiodic}.
\end{defn}

A rational function is clearly periodic with $\ba=0.$ Conversely, suppose that 
\eqref{EqPeriodic} holds {\em and} 
$\frac{\ba}{h}$ is rational, that is $\ba=\frac{ph}{q}. $
In this case, letting $\bar\psi=\frac{\ba+h\psi}{\brh}$ with $\brh=\frac{h}{q}$,
we obtain that $\bar\psi$ is integer valued and 
so $\varphi$ is rational.

Thus we have three cases: $\varphi$ can be either aperiodic, periodic irrational or rational.

Recall that if $\tau$ and $\bar \tau$ 
are homologous, i.e. $\bar\tau=\tau+\mathfrak h - \mathfrak h \circ T$,
then the corresponding suspension flows are conjugated by the transformation
$\bar{s}=s+h(x).$

Proposition \ref{PrLLT-Ren} addresses mixing in the case $\tau$ is aperiodic. If $\tau$ is rational 
then $g_t$ is not mixing. Indeed, after the change of variables we can assume that the
roof function belongs to $h\mathbb Z.$ In this case, if the initial condition has $s$ close to $0$,
then $s(t)$ will be close to $h\mathbb Z$ for all times in $h\mathbb{Z}$ so it can not come close
to the set $|s-\frac{h}{2}|\leq \eps$ if $\eps$ is small enough.

It remains to address the periodic irrational case. This is done in Proposition \ref{prop:lattice} below.

Given a function $\mathfrak h: X \rightarrow \mathbb R$ and numbers
$k, w_k$, let $\mathcal F_{k,\mathfrak h,w_k}: X \rightarrow X \times X
\times \mathbb R$ be defined by
$$
\mathcal F_{k,\mathfrak h,w_k}: x \mapsto (x,f^kx,
\tau_k (x) - \mathfrak h (x) + \mathfrak h (f^k(x)) - w_k).
$$

\begin{proposition}
\label{prop:lattice}
Assume that $X$ is a topological space.
\eqref{IMMix} 
remains valid 
for any $A$, $B$ with $\mu (\partial A ) = \mu (\partial B ) = 0$
if \eqref{MixLLT} is replaced by
the following assumption

"There is a 
bounded and continuous function $\mathfrak h: X \rightarrow \mathbb R$ and
constants $\ba, h$ such that $\dfrac{\ba}{h}$ is irrational and 
\begin{equation}
\label{LLTLat}
\lim_{k \rightarrow \infty}
\cR(k) \int \phi d(\mathcal F_{k,\mathfrak h,w_k})_* \mu 
=
\brc \rho(w) \int \phi d (\mu \times \mu \times u),
\end{equation}
for any $\phi \in \mathcal C (X \times X\times \mathbb R)$, compactly
supported in the last coordinate, where
$u$ is $h$ times the counting measure on $h \mathbb Z$ and
$w_k \in \ba k + h \mathbb Z$, 
with $|w_k/ \cR(k) - w|$ bounded. Furthermore, the convergence is uniform
for $w< 1/\varepsilon$."
\end{proposition}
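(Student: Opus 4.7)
The plan is to parallel the proof of Proposition~\ref{PrLLT-Ren}, with the non-lattice local limit theorem replaced by the lattice version \eqref{LLTLat}; the resulting lattice artifacts are averaged out via Weyl's equidistribution theorem, which is available precisely because $\ba/h$ is irrational. All other hypotheses of Proposition~\ref{PrLLT-Ren} (in particular \eqref{LLTUpper}) remain in force and handle the tails.

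First I would decompose
\[
\nu(\cA \cap g_{-t}\cB) = \sum_{n \geq 0} I_n(t), \qquad I_n(t) = \int_X \mathbf{1}_A(x)\mathbf{1}_B(f^n x)\, L_0(\tau_n(x) - t)\, d\mu(x),
\]
where $L_0(r) = |[a_1, a_2] \cap [r + b_1, r + b_2]|$ is a continuous, piecewise linear trapezoidal function with $\int L_0(r)\,dr = (a_2-a_1)(b_2-b_1)$. For each $n$, pick $w_n \in \ba n + h \mathbb{Z}$ closest to $t$ and set $\epsilon_n = t - w_n \in [-h/2, h/2]$. Since $\tau_n - \mathfrak{h} + \mathfrak{h}\circ f^n$ takes values in $\ba n + h \mathbb{Z}$, applying \eqref{LLTLat} to a continuous approximation of the test function $\phi(x, y, z) = \mathbf{1}_A(x) \mathbf{1}_B(y) L_0(z - \epsilon_n + \mathfrak{h}(x) - \mathfrak{h}(y))$ (with $\mathbf{1}_A, \mathbf{1}_B$ sandwiched from above and below by continuous functions using $\mu(\partial A) = \mu(\partial B) = 0$) gives
\[
I_n(t) \sim \frac{\brc\, \rho(w_n/\cR(n))}{\cR(n)} \int_A\int_B \tilde S\bigl(\epsilon_n + \mathfrak{h}(y) - \mathfrak{h}(x)\bigr)\,d\mu(x)\,d\mu(y),
\]
where $\tilde S(\xi) = h \sum_{m \in \mathbb{Z}} L_0(hm - \xi)$ is Lipschitz, $h$-periodic, and has mean $(a_2-a_1)(b_2-b_1)$.

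Write $\tilde S = (a_2-a_1)(b_2-b_1) + R$ with $R$ Lipschitz, $h$-periodic, and mean zero. The constant piece contributes $(a_2-a_1)(b_2-b_1)\mu(A)\mu(B)\sum_n \brc\rho(w_n/\cR(n))/\cR(n)$, which tends to $\hc\,\nu(\cA)\nu(\cB)/[\cL(t) t^{1-\alpha}]$ by exactly the Riemann-sum plus large-deviation argument of Proposition~\ref{PrLLT-Ren} (the tail where $w_n/\cR(n) > 1/\varepsilon$ or where \eqref{LLTLat} is not uniformly applicable is controlled via \eqref{LLTUpper}). The remaining oscillatory contribution is $\sum_n \frac{\brc\rho(w_n/\cR(n))}{\cR(n)} \Lambda(\epsilon_n)$, with $\Lambda(\epsilon) = \int_A\int_B R(\epsilon + \mathfrak{h}(y) - \mathfrak{h}(x))\,d\mu\,d\mu$ continuous, $h$-periodic, and of mean zero.

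The main obstacle is showing that this oscillatory sum is $o(1/[\cL(t) t^{1-\alpha}])$. The key input is that $\epsilon_n \equiv t - \ba n \pmod h$ with $\ba/h$ irrational, so by Weyl's theorem $\{\epsilon_n / h\}$ equidistributes on $[0,1)$. Expanding $\Lambda$ in its Fourier series truncated at level $K$ (with sup-norm error $O(\log K/K)$ from Lipschitz regularity of $R$), each non-zero mode produces an exponential sum $\sum_n a_n e^{-2\pi i k \ba n/h}$ with weights $a_n = \brc\rho(w_n/\cR(n))/\cR(n)$. Abel summation, using $\max_n a_n = O(1/t)$ together with the essentially unimodal bound $\sum_n |a_{n+1}-a_n| = O(\max_n a_n)$, estimates each such sum by $O_k(1/t) = o(\sum_n a_n) = o(1/[\cL(t) t^{1-\alpha}])$ since $\alpha > 0$. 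Combining this with the $O(\log K/K)$ truncation error and letting $K \to \infty$ slowly with $t$ closes the argument. Once again, the small-$n$ regime where \eqref{LLTLat} is inapplicable is handled directly by the large deviation bound \eqref{LLTUpper}, mirroring the tail analysis of Proposition~\ref{PrLLT-Ren}.
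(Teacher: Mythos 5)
Your overall structure correctly mirrors the paper's own proof: the renewal series is split into the three regimes $I$, $\RmII$, $\RmIII$, the tails $I$ and $\RmIII$ are controlled by \eqref{LLTUpper} exactly as in Proposition~\ref{PrLLT-Ren}, and the lattice structure in the middle regime is averaged out using the irrationality of $\ba/h$. Your decomposition into $\sum_n I_n(t)$ with the trapezoidal kernel $L_0$ is just the paper's $\int_{a_1}^{a_2}\sum_k(\mathcal F_{k,\mathfrak h,t})_*\mu(C(a))\,da$ after performing the inner $a$-integral, so no material difference there.

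Where you genuinely diverge is in \emph{how} you average out the lattice. The paper chops $[\eps\cN(t),\cN(t)/\eps]$ into blocks of fixed length $Q$, observes that $\rho(t/\cR(k))/\cR(k)$ is essentially constant across a block (so it can be pulled out), applies Weyl's equidistribution to the block average, and then treats the outer sum over blocks as a Riemann sum; this needs nothing beyond boundedness and continuity of $\rho$. You instead expand the $h$-periodic oscillatory part $\Lambda$ in a truncated Fourier series and bound each exponential sum $\sum_n a_n e^{-2\pi i m\ba n/h}$ by summation by parts. This is a legitimate classical-renewal-theory route, but it has a genuine gap: the Abel bound requires $\sum_n |a_{n+1}-a_n|=O(\max_n a_n)$, and the dominant part of this total variation is $\cR(\eps\cN(t))^{-1}\sum_n|\rho(z_{n+1})-\rho(z_n)|$ with $z_n=t/\cR(n)$ running monotonically through the compact window $[\eps^{1/\alpha},\eps^{-1/\alpha}]$. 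That sum is controlled only if $\rho$ has bounded variation on compacts (your ``essentially unimodal'' hypothesis). The proposition assumes merely that $\rho$ is a \emph{bounded continuous} density, and there exist bounded continuous densities of unbounded local variation, for which your partial-summation step fails. You should either add a BV/monotonicity hypothesis on $\rho$ (harmless in applications, where $\rho$ is an $\alpha$-stable density and hence smooth) or switch to the paper's block-averaging argument, which sidesteps the issue because a Riemann sum only needs Riemann integrability.

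A smaller point worth flagging: you apply \eqref{LLTLat} with a test function $\phi_n$ that depends on $n$ through the shift $\epsilon_n\in[-h/2,h/2]$. Since \eqref{LLTLat} is stated for a fixed $\phi$, one needs uniform convergence over the equicontinuous, uniformly compactly supported family $\{\phi_n\}$; this is true but should be said explicitly (the paper has the same implicit step when approximating the $k$-dependent translates of $1_{C(a)}$).
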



\subsection{Proofs.}

\begin{proof}[Proof of Proposition \ref{PrLLT-Ren}]
\begin{equation}\label{RenewSum}
 \nu\left(\cA\cap g_{-t} \cB\right)=
\end{equation}

$$\int_{a_1}^{a_2} \sum_k \mu\left(x\in A, f^k x\in B, 
\tau_k(x)+b_1\leq a+t\leq \tau_k+b_2\right) da. $$
The last condition can be rewritten as
$$ \tau_k(x)\in [t+a-b_2, t+a-b_1]. $$

Let $\cN(t)$ be the smallest number such that $\cR(\cN(t))\geq t.$
Let us decompose the sum \eqref{RenewSum} as $I+\RmII+\RmIII$ where
$I$ includes the terms with $k< \eps \cN(t) ,$ $\RmIII$ includes the terms
with $k\geq \cN(t)/\eps$ and $\RmII$ comprises the remaining terms. By
\eqref{LLTUpper} and Karamata Theorem (\cite[\S 1.5.6]{BGT})
\begin{equation}\label{errorI}
I\leq \frac{\Const}{\cL^{\beta_2}(t) t^{\beta_1} } \sum_{k=1}^{\eps \cN(t)}  
\frac{1}{k^{\beta_2}\cR^{\beta_3}(k)}
\leq \frac{\Const}{\cL^{\beta_2}(t) t^{\beta_1}} 
\frac{\left(\eps \cN(t) \right)^{1-\beta_2}}{\cR^{\beta_3} (\eps\cN(t))}
\end{equation}
Since $\cR$ is regularly varying we have (see \cite[\S 1.5.7]{BGT}) that
\begin{equation}
\label{RN-T}
 \lim_{t\to\infty} \frac{\cR(\cN(t))}{t}=1. 
\end{equation}
Hence
$$\lim_{t\to\infty} \frac{\cR(\eps\cN(t))}{t}=\eps^{1/\alpha} .$$
Thus

$$
I\leq \Const \eps^{1-\beta_2-\beta_3/\alpha} \frac{ \cN^{1-\beta_2} (t)}
{\cL^{\beta_2}(t) t^{\beta_1+\beta_3}} $$
Comparing \eqref{R-T} and \eqref{RN-T} we obtain
\begin{equation}
\label{N-T}
\lim_{t\to\infty} \frac{\cL(t) \cN(t)}{t^\alpha}=1.
\end{equation}
Therefore
\begin{equation}
\label{I-Neg}
I\leq \Const \frac{\eps^{1-\beta_2-\beta_3/\alpha} }{\cL(t)t^{\beta_1+(\beta_2-1)\alpha+\beta_3}}=
\Const \; \frac{\eps^{1-\beta_2-\beta_3/\alpha}}{\cL(t) t^{1-\alpha}}
\end{equation}
and so $I$ is negligible.

Next, by \eqref{MixLLT} and Karamata Theorem 
$$ \RmIII\leq\Const \sum_{k>\cN(t) /\eps} \frac{1}{\cR(k)} \leq 
\Const \eps^{(1/\alpha)-1}  \frac{\cN(t) }{\cR(\cN(t))} . $$
Using \eqref{RN-T} and \eqref{N-T} 
we see that
$$ \RmIII\leq \Const \eps^{(1/\alpha)-1} \frac{t^\alpha}{\cL(t) t} $$
which is also negligible.

On the other hand by \eqref{MixLLT} we have
\begin{equation}\label{leadingsum}
 \RmII\sim  \brc (a_2-a_1) (b_2-b_1) \mu(A)\mu(B) 
 \sum_{k=\eps \cN(t) }^{\cN(t)/\eps}
\rho\left(\frac{t}{\cR(k)}\right) \frac{1}{\cR(k)}. 
\end{equation}
By regular variation
$$ \frac{t}{\cR(k)}= \frac{\cR(\cN(t))}{\cR(k)} (1+o(1))=
\left(\frac{\cN(t)}{k}\right)^{1/\alpha} (1+o(1)) $$
so the sum in \eqref{leadingsum} is asymptotic to
$$ \frac{1}{t}  \sum_{k=\eps \cN(t) }^{\cN(t)/\eps}
\rho\left(\left(\frac{\cN(t)}{k}\right)^{1/\alpha} \right) 
\left(\frac{\cN(t) }{k}\right)^{1/\alpha} . $$
Let $z_k=\left(\frac{\cN(t)}{k}\right)^{1/\alpha}.$ 
Then
$ z_k-z_{k+1} \approx  \frac{\cN^{1/\alpha}(t)}{\alpha k^{1+1/\alpha}}. $
Writing
$$ \rho\left(\left(\frac{\cN(t)}{k}\right)^{1/\alpha}\right) \left(\frac{\cN(t) }{k}\right)^{1/\alpha} \sim
\rho(z_k) {\alpha } (z_k-z_{k+1}) k=
{\alpha } {\cN(t)}\rho(z_k) \frac{z_k-z_{k+1}}{ z_k^{\alpha} }$$
we see that the sum in \eqref{leadingsum} is asymptotic to  
\begin{equation}
\label{RhoInt}
\frac{\alpha \cN(t)}{ t}
\int_{L_1(\eps)}^{L_2(\eps)} \frac{\rho(z)}{z^{\alpha}} \; dz 
\end{equation}
where $L_1(\eps)\to 0$ and $L_2(\eps)\to \infty$ as $\eps\to 0.$ 
Combining the estimates for $I,$ $\RmII,$ and $\RmIII$ and using \eqref{N-T} to eliminate 
$\cN(t)$ from  \eqref{RhoInt} 
we obtain the result.
\end{proof}

\begin{proof}[Proof of Proposition \ref{prop:lattice}]
Let 
$$
C(a) = \{ (x,y,z)\in X \times X \times \mathbb R:
x \in A, y \in B, z - \mathfrak h(x) + \mathfrak h(y) \in [a - b_2, a- b_1]
\}
$$
Then
$$
\nu(\mathcal A \cap g_{-t} \mathcal B) \\
= \int_{a_1}^{a_2} \sum_k 
(\mathcal F_{k,\mathfrak h,t})_* \mu (C(a)) da
$$
We decompose this sum into $I+\RmII+\RmIII$ as before, and use the
same estimates for $I + \RmIII$. 
To revisit the computation of $\RmII$,
first observe that $\mu (\partial A ) = \mu (\partial B ) = 0$
implies $(\mu \times \mu \times u) (\partial C(a)) = 0$ and thus
by approximating $1_C$ with continuous functions, we find
that \eqref{LLTLat} implies
$$
\RmII \sim \sum_{k=\varepsilon\cN(t) }^{\cN(t)/\varepsilon}
\frac{\brc}{\cR(k) }  \rho\left(\frac{t}{\cR(k)}\right)
\int_{a_1}^{a_2} (\mu \times \mu \times u) (\{ (x,y,z):
(x,y,z - k\ba ) \in C(a) \} ) da.
$$
Fixing some $Q$ large positive integer and writing
$$
 \sum_{k=\varepsilon \cN(t)}^{\cN(t)/\varepsilon}
= \sum_{i=0}^{(\frac{1}{\eps} - \eps -1) \frac{\cN(t)}{Q}}\;\;
\sum_{k=\eps t^{\alpha} +iQ}^{\eps t^{\alpha} +(i+1)Q -1},
$$
we find that 
\begin{align*}
\RmII &\sim \brc Q 
 \sum_{i=0}^{t^{\alpha} (\frac{1}{\eps} - \eps -1) \frac{1}{Q}}
\rho\left(\frac{t}{\cR(\eps \cN(t)+iQ)}\right) \frac{1}{\cR(\eps\cN(t) +iQ)}\\
&\int_{a_1}^{a_2} \frac{1}{Q}
\sum_{k=\eps t^{\alpha} +iQ}^{\eps t^{\alpha} +(i+1)Q -1}
(\mu \times \mu \times u) (\{ (x,y,z):
(x,y,z - k\ba ) \in C(a) \} ) da
\end{align*}
Since $\frac{\ba}{h}$ is irrational, 
Weyl's theorem implies that the integrand in the second line
of the last display is
$\mu(A) \mu(B) [b_2-b_1](1+o_{Q \rightarrow \infty}(1))$,
uniformly in $a,t, \eps$ and $i$.
Thus $\RmII$ is asymptotic to a Riemann sum and the 
proof can be completed as in the case of Proposition 
\ref{PrLLT-Ren}.
\end{proof}

\begin{remark}
\label{RkMultiSum}
The conclusion of Propositions \ref{PrLLT-Ren} and \ref{prop:lattice}
remain valid if \eqref{LLTUpper}
is replaced by
\begin{equation}
\label{LLTUpper2} 
\mu\left(\tau_k\in [t, t+l]\right)\leq \sum_{j=1}^r 
\frac{C l}{\cL^{\beta_{2,j}} (t) t^{\beta_{1,j}} k^{\beta_{2,j}} \cR^{\beta_{3,j}}(k)} .
\end{equation}
where $\beta_{1,j}$ $\beta_{2,j}$ and $\beta_{3,j}$ satisfy \eqref{Beta1-2} for each $j.$
Indeed, we can replace \eqref{I-Neg} by
$$
I\leq \Const \sum_{j=1}^r \sum_{k=1}^{\eps \cN(t) }\frac{1}{{\cL^{\beta_{2,j}} (t) t^{\beta_{1,j}} k^{\beta_{2,j}} \cR^{\beta_{3,j}}(k)} }
\leq
\Const \eps^{1-\max_j \left(\beta_{2,j}+\frac{\beta_{3,j}}{\alpha}\right)} \frac{t^{\alpha-1}}{\cL(t)}. $$
\end{remark}

According to a common terminology, 
$\tau$ {\em satisfies a mixing local limit theorem} if either $\tau$ is aperiodic
and \eqref{MixLLT} holds or $\tau$ is periodic (either rational or irrational) and 
\eqref{LLTLat} holds. The results of this section could be summarized as follows.

\begin{theorem}
\label{ThLLT-Ren}
If $\tau$ is irrational, satisfies  a mixing local limit theorem and \eqref{LLTUpper2},
then \eqref{IMMix}  holds.
\end{theorem}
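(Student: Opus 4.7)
My plan is to deduce Theorem \ref{ThLLT-Ren} by combining Propositions \ref{PrLLT-Ren} and \ref{prop:lattice} together with Remark \ref{RkMultiSum}. Since $\tau$ is assumed irrational, Definition \ref{def:per} splits the hypothesis into two mutually exclusive subcases: either $\tau$ is aperiodic, or $\tau$ is periodic with $\ba/h$ irrational. The phrase ``$\tau$ satisfies a mixing local limit theorem'' is then interpreted as \eqref{MixLLT} in the first case and as \eqref{LLTLat} in the second, so these two subcases must be handled with different propositions.

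First I would fix the aperiodic case. Here the hypotheses \eqref{MixLLT} and \eqref{LLTUpper2} are both in force. Proposition \ref{PrLLT-Ren} gives the desired asymptotics \eqref{IMMix} directly under \eqref{MixLLT} and the single-term bound \eqref{LLTUpper}. To upgrade from \eqref{LLTUpper} to \eqref{LLTUpper2}, I would invoke Remark \ref{RkMultiSum}, which spells out precisely how the decomposition $I+\RmII+\RmIII$ continues to work: only the estimate of the small-$k$ tail $I$ depends on the upper bound, and summing the inequality in \eqref{I-Neg} over $j=1,\dots,r$ still yields a quantity of order $\eps^{1-\max_j(\beta_{2,j}+\beta_{3,j}/\alpha)}/(\cL(t) t^{1-\alpha}),$ which tends to $0$ as $\eps\to 0$ because each exponent $1-\beta_{2,j}-\beta_{3,j}/\alpha$ is strictly positive by \eqref{Beta1-2}.

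Next I would run the analogous reduction in the periodic irrational case. Proposition \ref{prop:lattice} already gives the desired asymptotics under \eqref{LLTLat} and the single-term bound \eqref{LLTUpper}. The same observation about Remark \ref{RkMultiSum} applies verbatim: the tails $I$ and $\RmIII$ in the proof of Proposition \ref{prop:lattice} are handled by exactly the same estimates as in the proof of Proposition \ref{PrLLT-Ren}, and the main term $\RmII$ is treated via the Weyl equidistribution argument, which is unaffected by the form of the upper bound on $\mu(\tau_k\in[t,t+l])$. Hence the conclusion \eqref{IMMix} follows in both subcases, and the theorem is obtained by combining them.

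The argument is essentially bookkeeping, and I do not expect any fundamentally new obstacle beyond what is already handled in Propositions \ref{PrLLT-Ren}--\ref{prop:lattice} and Remark \ref{RkMultiSum}. The only subtle point is to verify that \eqref{LLTUpper2} produces a uniformly negligible contribution to $I,$ which requires $\max_j(\beta_{2,j}+\beta_{3,j}/\alpha)<1.$ This is guaranteed term-by-term by the hypothesis \eqref{Beta1-2} applied to each $j,$ so checking the finitely many summands individually and then adding suffices.
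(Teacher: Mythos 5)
Your proposal is correct and follows exactly the route the paper intends: the theorem is stated as a summary of Propositions \ref{PrLLT-Ren} and \ref{prop:lattice}, with Remark \ref{RkMultiSum} supplying the upgrade from \eqref{LLTUpper} to \eqref{LLTUpper2}, and the irrationality hypothesis splitting into the aperiodic and periodic-irrational subcases exactly as you describe.
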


In other words, if the appropriate local limit theorem and large deviation bounds hold for the
base map, then the special flow is mixing in both aperiodic and periodic irrational cases but
not in the rational case. A similar result holds in the finite measure case (see \cite[Section~2]{DN2}).

In the next sections we provide examples of systems satisfying the 
conditions of Theorem \ref{ThLLT-Ren}.

\subsection{Power tail.} Here we consider an important special case where
the function $\cL$ is asymptotically constant. Thus we assume that there is a 
constant $\bc$ such that

\begin{equation}
\label{TailP}
\mu(\tau>t)\sim \frac{\bc}{t^\alpha} \quad \text{for} \quad 0<\alpha<1.
\end{equation}
In this case one can take $\cR(k)=(\bc k)^{1/\alpha}$ and the statements of Propositions
\ref{PrLLT-Ren} and \ref{prop:lattice}  can be simplified as follows.

\begin{proposition}
\label{PrLLT-RenP}
Suppose that \eqref{TailP} holds and there is a bounded continuous density $\rho$ on $[0, \infty)$
such that either

(i) $\tau$ is aperiodic and
 for all sets $A,B\in \cX$ we have
\begin{equation}
\label{MixLLTP}
\mu\left(x\in A, f^k x\in B, \tau_k(x)\in [t, t-l]\right)\approx
\end{equation}
$$ \brc \rho\left(\frac{t}{(\bc k)^{1/\alpha}}\right) \frac{l}{(\bc k)^{1/\alpha}} \mu(A)\mu(B) $$
uniformly for $t\leq \dfrac{k^{1/\alpha}}{\eps},$ or

(ii) $\tau$ is periodic irrational and there is a 
bounded continuous function $\mathfrak h: X \rightarrow \mathbb R$ and
constants $\ba, h$ such that $\dfrac{\ba}{h}$ is irrational and 
\begin{equation}
\label{LLTLatP}
\lim_{k \rightarrow \infty}
(\bc k)^{1/\alpha} \int \phi d(\mathcal F_{k,\mathfrak h,w_k})_* \mu 
=
\brc \rho(w) \int \phi d (\mu \times \mu \times u),
\end{equation}
for any $\phi \in \mathcal C (X \times X\times \mathbb R)$, compactly
supported in the last coordinate, where $\cF, u$ are as in Proposition \ref{prop:lattice}
and $w_k \in \ba k + h \mathbb Z,$ 
with $|w_k/ (\bc k)^{1/\alpha} - w|$ bounded. Moreover, the convergence is uniform
for $w< 1/\varepsilon$.

Assume in addition that 
\begin{equation}
\label{LLTUpper2P} 
\mu\left(\tau_k\in [t, t+l]\right)\leq \sum_{j=1}^r \frac{C l}{t^{\gamma_{1,j}} k^{\gamma_{2,j}}} .
\end{equation}
where for each $j=1\dots r$
\begin{equation}
\label{Gamma1-2}
 \gamma_{2,j}<1, \quad \gamma_{1,j}+\gamma_{2,j}\alpha=1.
\end{equation}
Then for $\cA, \cB$ satisfying the assumptions of Proposition \ref{PrLLT-Ren} we have
\begin{equation}
\label{IMMixP}
\lim_{t\to\infty} \nu(\cA\cap g_{-t} \cB) t^{1-\alpha}
=\hc \nu(\cA) \nu(\cB).
\end{equation}
\end{proposition}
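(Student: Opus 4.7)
The plan is to deduce Proposition \ref{PrLLT-RenP} directly from Theorem \ref{ThLLT-Ren}, that is, from Propositions \ref{PrLLT-Ren} and \ref{prop:lattice} together with the extension given by Remark \ref{RkMultiSum}. The power-tail assumption \eqref{TailP} is exactly \eqref{Tail} with the slowly varying function chosen to be the constant $\cL \equiv \bc$. With this choice, the defining relation \eqref{R-T} gives
\[
\frac{t\,\cL(\cR(t))}{\cR^\alpha(t)} = \frac{t\,\bc}{\cR^\alpha(t)} \to 1,
\]
so $\cR(k) = (\bc k)^{1/\alpha}$ is a legitimate representative of the asymptotic equivalence class, and I fix this choice throughout.

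With $\cR(k) = (\bc k)^{1/\alpha}$, the aperiodic hypothesis \eqref{MixLLTP} is literally \eqref{MixLLT}, and the periodic irrational hypothesis \eqref{LLTLatP} is literally \eqref{LLTLat}, including the uniformity statements (the range $t \le k^{1/\alpha}/\eps$ in \eqref{MixLLTP} matches $t \le \cR(k)/\eps$ up to replacing $\eps$ by $\bc^{1/\alpha}\eps$, which is harmless). Thus the local limit inputs of Theorem \ref{ThLLT-Ren} are immediate in both cases (i) and (ii).

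The substantive step is matching the large deviation bounds. Substituting $\cL \equiv \bc$ and $\cR^{\beta_{3,j}}(k) = \bc^{\beta_{3,j}/\alpha} k^{\beta_{3,j}/\alpha}$ into \eqref{LLTUpper2}, the $j$-th summand becomes, up to an overall constant depending only on $\bc$,
\[
\frac{C\,l}{t^{\beta_{1,j}}\,k^{\beta_{2,j} + \beta_{3,j}/\alpha}}.
\]
Setting $\gamma_{1,j} = \beta_{1,j}$ and $\gamma_{2,j} = \beta_{2,j} + \beta_{3,j}/\alpha$, the constraint $\beta_{2,j} + \beta_{3,j}/\alpha < 1$ of \eqref{Beta1-2} is exactly $\gamma_{2,j} < 1$, while the identity $\beta_{1,j} + \beta_{2,j}\alpha + \beta_{3,j} = 1$ rewrites as $\gamma_{1,j} + \alpha\gamma_{2,j} = 1$, giving \eqref{Gamma1-2}. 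Conversely, given $(\gamma_{1,j},\gamma_{2,j})$ satisfying \eqref{Gamma1-2}, the choice $\beta_{2,j} = 0$, $\beta_{3,j} = \alpha\gamma_{2,j}$, $\beta_{1,j} = \gamma_{1,j}$ exhibits a triple verifying \eqref{Beta1-2} and recovers \eqref{LLTUpper2} from \eqref{LLTUpper2P}. Having matched all hypotheses of Theorem \ref{ThLLT-Ren}, the conclusion \eqref{IMMix} specializes to \eqref{IMMixP}, the factor $\cL(t) \equiv \bc$ being absorbed into the constant on the right.

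There is no real obstacle here: the argument is an unpacking of definitions plus the reparametrization $(\gamma_{1,j},\gamma_{2,j}) \leftrightarrow (\beta_{1,j},\beta_{2,j},\beta_{3,j})$. The only place a reader could stumble is checking that the full system of constraints \eqref{Beta1-2} collapses to the two-parameter constraint \eqref{Gamma1-2} under this reparametrization, which is the computation above. Consequently no new analytic content beyond Theorem \ref{ThLLT-Ren} is required.
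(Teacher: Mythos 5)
Your reduction is correct and is exactly the paper's implicit argument: the paper states this proposition immediately after noting that in the power-tail case one may take $\cR(k)=(\bc k)^{1/\alpha}$ and specialize Propositions \ref{PrLLT-Ren} and \ref{prop:lattice} (extended by Remark \ref{RkMultiSum}). Your explicit reparametrization $\gamma_{1,j}=\beta_{1,j}$, $\gamma_{2,j}=\beta_{2,j}+\beta_{3,j}/\alpha$ showing \eqref{Beta1-2} $\Leftrightarrow$ \eqref{Gamma1-2} is the right bookkeeping, and the hypotheses in cases (i) and (ii) are indeed verbatim transcriptions of \eqref{MixLLT} and \eqref{LLTLat} under this choice of $\cR$.
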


\section{Tools.}
\label{ScBack}
\subsection{Anticoncentration inequlity.}
Here we obtain a useful {\it a priori} bound.

\begin{proposition}
Suppose that for $|s|\leq \delta$
\begin{equation}
\label{CharBound}
 \left|\mu\left(e^{i s \tau_k}\right)\right|\leq \left(1-c \cL\left(\frac{1}{|s|}\right)|s|^\alpha\right)^k. 
\end{equation}
Then for any interval $I$ of unit size
\begin{equation}
\label{AntiConc} 
 \mu\left(\tau_k\in I\right)\leq \frac{D}{\cR(k)}. 
 \end{equation}
\end{proposition}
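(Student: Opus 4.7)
The plan is to use a Fourier smoothing argument. I would begin by fixing once and for all a nonnegative Schwartz function $K$ on $\reals$ with $K\ge \mathbf{1}_{[-1/2,1/2]}$ and $\hat K$ supported in $[-\delta,\delta]$; a standard choice is $K = \psi\ast\psi$ for a suitable smooth $\psi$ with $\hat\psi$ supported in $[-\delta/2,\delta/2]$. If $I$ is a unit interval centered at $c$, then
\[
\mu(\tau_k\in I)\;\le\;\int K(\tau_k-c)\,d\mu \;=\; \int_{-\delta}^{\delta}\hat K(s)\,e^{-isc}\,\mu(e^{is\tau_k})\,ds
\]
by Fourier inversion. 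Taking absolute values and applying \eqref{CharBound} together with the elementary inequality $(1-x)^k\le e^{-kx}$ yields
\[
\mu(\tau_k\in I)\;\le\;\int_{-\delta}^{\delta}|\hat K(s)|\,\exp\!\bigl(-ck\,\cL(1/|s|)\,|s|^{\alpha}\bigr)\,ds.
\]

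Next I would substitute $s=u/\cR(k)$, which rewrites the bound as
\[
\mu(\tau_k\in I)\;\le\;\frac{1}{\cR(k)}\int_{|u|\le \delta\cR(k)}|\hat K(u/\cR(k))|\,\exp\!\bigl(-c\,k\,\cL(\cR(k)/|u|)\,|u|^{\alpha}/\cR(k)^{\alpha}\bigr)\,du.
\]
Using the defining relation \eqref{R-T}, $k\,\cL(\cR(k))/\cR(k)^{\alpha}\to 1$, the exponent is asymptotic to $c|u|^{\alpha}$ when $|u|$ is bounded. For $|u|\ge 1$ the pointwise slow-variation limit is not enough; I would invoke the Potter bound: for any fixed $\eta\in(0,\alpha)$ and all $k$ sufficiently large,
\[
\cL(\cR(k)/|u|)\;\ge\;\tfrac{1}{2}\,\cL(\cR(k))\,|u|^{-\eta}\qquad\text{uniformly for }|u|\ge 1.
\]
Combining the two regimes, the exponent is bounded below by $c'|u|^{\alpha-\eta}$ uniformly in $k$, so the integrand is dominated by $\|\hat K\|_\infty\,e^{-c'|u|^{\alpha-\eta}}$, which is integrable on all of $\reals$. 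Therefore the integral is $O(1)$ and $\mu(\tau_k\in I)\le D/\cR(k)$, as claimed.

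The main obstacle is the uniformity issue in the third paragraph: because $|u|$ ranges over an interval growing with $k$, one cannot close the argument from the bare definition of slow variation, and the quantitative Potter/Karamata estimate is essential. The rest---the smoothing inequality, Fourier inversion, and the change of variables---is routine.
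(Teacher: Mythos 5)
Your proof is correct and follows essentially the same strategy as the paper's: convert \eqref{CharBound} into an exponential bound on the characteristic function via Potter's inequality (after reducing to $\delta$ small), then control $\mu(\tau_k\in I)$ by Fourier inversion against a nonnegative band-limited majorant of the indicator. The only cosmetic differences are that the paper works with the explicit Fej\'er-type kernel $H(x)=\frac{1-\cos\delta x}{\pi\delta^2 x^2}$ (which majorizes a constant on $[-\delta/2,\delta/2]$) and then covers a unit interval by $O(1/\delta)$ subintervals, whereas you take an abstract band-limited majorant of $\mathbf{1}_{[-1/2,1/2]}$ and perform the substitution $s=u/\cR(k)$ explicitly.
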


\begin{proof}
Without loss of generality we may assume that $\delta\leq 1.$
Denote $Z(s)=\cL(1/|s|) |s|^\alpha.$ Note that by \eqref{R-T},
$Z\left(\dfrac{1}{\cR(k)}\right)=\dfrac{1+o(1)}{k}.$ Next, by the Potter bounds (\cite[\S 1.5.4]{BGT}),
for each $\beta<\alpha$, there is a constant $C(\beta)$ such that for $\dfrac{1}{\cR(k)}\leq |s|\leq \delta$, we have
$$  Z(s)\geq C(\beta) Z\left(\frac{1}{\cR(k)}\right) (s\cR(k))^\beta. $$
Thus \eqref{CharBound} implies that for $s\in [-\delta, \delta]$
\begin{equation}
\label{ZPot}
 \left|\mu\left(e^{i s \tau_k}\right)\right|\leq e^{-\brC (s\cR(k))^\beta}. 
\end{equation}
Let $H(x)=\frac{1-\cos \delta x}{\pi \delta^2 x^2}. $ Then
  $\hH(s)=1_{|s|<\delta} \left(\frac{1}{\delta}-\frac{|s|}{\delta^2}\right).$
  Therefore for each~$a$
  $$ \mu(H(\tau_k-a))\leq 
  \frac{1}{2 \pi} \int_{-\delta}^{\delta} \left|\mu\left(e^{-is\tau_k}\right)\right| \hH(s) ds \leq
  \frac{1}{2 \pi \delta} \int_{-\delta}^{\delta} \left|\mu\left(e^{-is\tau_k}\right)\right| ds \leq \frac{C}{\cR(k)},
  $$
 where the last step uses \eqref{ZPot}.
  
  On the other hand $H(x)\geq \frac{47}{96 \pi}$ if $|x|\leq \delta/2.$
  Therefore
  $$ \mu\left(\tau_k-a\in \left[-\frac{\delta}{2}, \frac{\delta}{2}\right)\right)
    \leq \frac{96 \pi}{47} \mu(H(\tau_k-a))\leq \;\; \frac{\hat C}{\cR(k)}. $$
    This proves our claim for intervals of size $\delta.$ Since any interval of unit size can be covered by
    a finite number of intervals of size $\delta$ the result follows. 
\end{proof}  

\subsection{Large deviations.}
\begin{theorem}
\label{ThLD}
Suppose that in addition to \eqref{Tail} the sets $\{(\tau\circ f^j)>t\}$ are {\em quasi-independent}
in the sense that
\begin{equation}
\label{QILarge}
\mu(\tau(f^{j_1} x)>t, \tau(f^{j_2} x)>t)\leq K \mu(\tau(f^{j_1} x)>t) \mu(\tau(f^{j_2} x)>t). 
\end{equation}

Then for $k<\cN(t)$ we have
\begin{equation}
 \label{EqLD}
 \mu(\tau_k(x)>t)\leq \frac{D k\cL(t) }{t^\alpha}. 
\end{equation}
In fact
$$ \lim_{k\to\infty, k/\cN(t)\to 0} \frac{t^\alpha}{k\cL(t)} \mu(\tau_k(x)>t)
=1. $$
\end{theorem}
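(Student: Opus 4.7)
The plan is a truncation argument combined with the quasi-independence hypothesis. Set $Y_j^{(s)} = (\tau\circ f^j)\, 1_{\tau\circ f^j \leq s}$, $M_k = \max_{0\leq j < k}(\tau\circ f^j)$, $S_k^{(s)} = \sum_{j=0}^{k-1} Y_j^{(s)}$, and recall the Karamata estimate $\mu(Y^{(s)}) \sim s^{1-\alpha}\cL(s)/(1-\alpha)$, valid since $\alpha<1$. The elementary inclusion
$$ \{\tau_k>t\}\subseteq\{M_k>t\}\cup\{S_k^{(t)}>t\} $$
is the starting point, and the proof then naturally splits into two parts: the crude bound \eqref{EqLD} with a constant $D$, and the sharp asymptotic in the regime $k/\cN(t)\to 0$.

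For \eqref{EqLD}, a union bound gives $\mu(M_k>t)\leq k\mu(\tau>t)\sim k\cL(t)/t^\alpha$, and Markov's inequality gives $\mu(S_k^{(t)}>t)\leq k\mu(Y^{(t)})/t = O(k\cL(t)/t^\alpha)$. Adding these produces the desired constant $D$ depending only on $\alpha$. Notably, this step does not yet use \eqref{QILarge}.

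For the sharp asymptotic, the lower bound follows from two-term Bonferroni and quasi-independence:
$$ \mu(M_k>t)\geq k\mu(\tau>t)-K\binom{k}{2}\mu(\tau>t)^2 = k\mu(\tau>t)(1+o(1)), $$
since $k\mu(\tau>t)\to 0$ in our regime by \eqref{N-T}. For the matching upper bound I would introduce a second truncation at level $\varepsilon t$, with $\varepsilon$ sent to $0$ after $k/\cN(t)\to 0$. Partition the residual event $\{\tau_k>t\}\cap\{M_k\leq t\}$ by $r=|\{j:\tau_j\in(\varepsilon t, t]\}|$: (i) $r=0$ forces $S_k^{(\varepsilon t)}>t$, controlled by Markov at order $\varepsilon^{1-\alpha}k\cL(t)/t^\alpha$; (ii) $r\geq 2$ is handled by the quasi-independence bound $K\binom{k}{2}\mu(\tau>\varepsilon t)^2$, which is $o(k\cL(t)/t^\alpha)$ once $k/\cN(t)\to 0$; (iii) $r=1$ splits further by whether the unique large entry $\tau_{j_*}$ lies in $((1-\eta)t,t]$ (a union bound gives $((1-\eta)^{-\alpha}-1)k\mu(\tau>t)$) or in $(\varepsilon t,(1-\eta)t]$ (in which case $\tau_k>t$ forces $\sum_{j\neq j_*}Y_j^{(\varepsilon t)}>\eta t$). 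Sending $k/\cN(t)\to 0$, then $\varepsilon\to 0$, then $\eta\to 0$ in that order kills every error term.

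The main obstacle is case (iii) with $\tau_{j_*}\in(\varepsilon t,(1-\eta)t]$: hypothesis \eqref{QILarge} only controls joint tails at a common threshold and so cannot decouple $\tau_{j_*}$ from the remaining sum. The trick that rescues the argument is the observation that $Y_{j_*}^{(\varepsilon t)}=0$ on this event, whence $\sum_{j\neq j_*}Y_j^{(\varepsilon t)}=S_k^{(\varepsilon t)}$, allowing Markov on the full truncated sum without any joint information beyond \eqref{QILarge}.
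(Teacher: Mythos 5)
Your proof is correct and relies on the same core toolkit as the paper — Karamata for truncated means, Markov's inequality for the truncated sum, a union bound for single exceedances, quasi-independence via Bonferroni for double exceedances, and Bonferroni again for the lower bound — but it organizes these tools around a genuinely different decomposition. You truncate first at the natural level $t$, so that $\{\tau_k>t\}\subseteq\{M_k>t\}\cup\{S_k^{(t)}>t\}$, with the first event giving the leading term; you then introduce a secondary cut at $\varepsilon t$ and a near-maximality threshold $(1-\eta)t$, and pass to three successive limits ($k/\cN(t)\to 0$, then $\varepsilon\to 0$, then $\eta\to 0$). The paper instead truncates once at the optimized level $H=(kt)^{1/(1+\alpha)}$, splits $\{\tau_k>t\}\subseteq\{\tau_k^-\geq\bar\varepsilon t\}\cup\{\tau_k^+\geq(1-\bar\varepsilon) t\}$, and classifies the second event by the number of exceedances over $H$; with this $H$ both the Markov term and the quasi-independence term are automatically $o(k\cL(t)/t^\alpha)$, so only the single parameter $\bar\varepsilon$ (playing the role of your $\eta$) requires a final limit, and the intermediate range that forces your case (iii) never arises. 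What your route buys is a cleaner proof of the crude bound \eqref{EqLD}: as you correctly observe, it follows from the $t$-truncation alone and does not use \eqref{QILarge} at all, a point the paper's derivation obscures. The paper's route is slightly slicker, needing one fewer auxiliary parameter. One small inaccuracy worth fixing: the Karamata constant in $\mu(\tau 1_{\tau\leq s})\sim \frac{\alpha}{1-\alpha}\,s^{1-\alpha}\cL(s)$ is $\frac{\alpha}{1-\alpha}$, not $\frac{1}{1-\alpha}$; the latter is the constant for $\int_0^s\mu(\tau>u)\,du$. This does not affect any step of your argument.
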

Since quisiindependence is weaker than independence this results contains in particular 
a large deviation estimate for sums of independent random variables (see \cite{D52,Tk}).

\begin{proof}
Given $H$ to be fixed later let
$$\tau^-=\tau 1_{\tau\leq H},\quad \tau^+=\tau 1_{\tau> H},\quad
\tau_k^\pm=\sum_{j=0}^{k-1} \tau^\pm (f^j x). $$
Then by Karamata theory (see \cite[Thm. 1.6.4]{BGT})
$$ \mu(\tau_k^-)=k\mu(\tau^-)\leq \Const k H^{1-\alpha} \cL(H). $$
Hence by Markov inequality for each $\breps>0$ we have
$$ \mu\left(\tau_k^-\geq \breps t\right)\leq
\Const \frac{k H^{1-\alpha}\cL(H)}{\breps t}. $$
Next
$$ \mu\left(\tau_k^+\geq (1-\breps) t\right)\leq
\mu\left(\tau_k^+1_{A_1}\geq (1-\breps) t\right)+
\mu\left(\tau_k^+1_{A_2}\geq (1-\breps) t\right) $$
where 
$A_1$ is the set where $\tau(f^j x)>H$ for exactly one index $j\in [0, k-1]$ and
$A_2$ is the set where $\tau(f^j x)>H$
for at least two indices $j\in [0, k-1].$  
On $A_1$ we should have $\tau(f^j x)>(1-\breps) t$ so 
$$ \mu\left(\tau_k^+1_{A_1}\geq (1-\breps) t\right)\leq \sum_{j=0}^{k-1} \mu\left(\tau(f^j x)>(1-\breps) t\right)
\leq \frac{k \cL((1-\breps)t)}{[(1-\breps)t]^\alpha}. $$
On the other hand the probability that there are two indices where $\tau\circ f^j$ is large can be estimated via
Bonferroni ineqaulity and \eqref{QILarge} by
$$ \sum_{j_1, j_2} \mu(\tau(f^{j_1} x)>H, \tau(f^{j_2} x)>H)\leq K k^2 \frac{\cL^2(H)}{H^{2\alpha}}. $$
Thus
$$ \mu(\tau_k(x)>t)\leq \Const\left[ \frac{k H^{1-\alpha}\cL(H)}{t}
  +\frac{k^2\cL^2(H)}{H^{2\alpha}}\right]+\frac{k \cL((1-\breps)t)}{[(1-\breps)t]^\alpha}. $$
Choosing $H=(kt)^{1/(1+\alpha)}$ we obtain
$$ \mu(\tau_k(x)>t)\leq \Const\;\;
\dfrac{k^2\cL^2(H) }{kt^{2\alpha/(1+\alpha)}} +\frac{k \cL((1-\breps)t)}{[(1-\breps)t]^\alpha}$$
$$=\Const\left(\dfrac{k}{t^\alpha}\right)^{\frac{2}{1+\alpha}}\cL^2(H)+
\frac{k \cL((1-\breps)t)}{[(1-\breps)t]^\alpha}. $$
This provides the required upper bound on large deviation probability  since $\dfrac{2}{1+\alpha}>1.$

To get the matching lower bound we note that
$$ \mu(\tau_k>t)\geq \mu\left(\max_j \tau(f^j x)>t\right). $$
By Bonferroni inequality the last probability can be estimated from below by

\noindent{\small
$\displaystyle \sum_{j=0}^{k-1} \mu(\tau(f^jx)>t)-
\sum_{j_1, j_2=0}^{k-1}
\mu(\tau(f^{j_1}x)>t, \tau(f^{j_2}x)>t)\geq
\dfrac{k\cL(t)}{t^\alpha}-\dfrac{K k^2\cL^2(t) }{t^{2\alpha}}. $}
\end{proof}

\section{Independent Random Variables.}
\label{ScInd}
Here we consider the case where $t_j=\tau\circ f^{j-1}$ are i.i.d. random variables
having non-lattice distribution.
We will recover a result of \cite{Er}. We note that the optimal results for
the infinite measure renewal theorem for independent random variables
are obtained in \cite{CD}. However, we include the section on independent
random variables in order to illustrate our approach in the simplest possible
setting.

We need to check \eqref{MixLLT} and \eqref{LLTUpper}. Let us first note that 
\eqref{CharBound} is satisfied in our case (see e.g. \cite[Eq. (2.6.38)]{IL})
and hence \eqref{AntiConc} holds.

\subsection{Local Limit Theorem.} 
In case $A=B=X$ \eqref{MixLLT} is proven in \cite{St}.  
Now let $A$, and $B$ some cylinder sets, i.e.
$$ A=\{(t_1, t_2\dots t_n)\in \bbA\}, \quad B=\{(t_{1-n}, \dots t_{-1}, t_0)\in \bbB\}.
$$
Denote $\bbA_R=\bbA \cap B(0, R),$ $\bbB_R=\bbB \cap B(0, R).$ We have
$$ \mu(x\in A, f^k x\in B, \tau_k\in I)=$$
$$\int_{\bbA} \int_{\bbB} \Prob\left(\tau_{k-2n}\in I-\sum_{j=1}^{n}
u_j-\sum_{j=1}^{n} v_j \right)
d \Prob(u_1, u_2, \dots u_{n} ) d\Prob(v_1, v_2, \dots v_{n}). $$
Using the Local Limit Theorem of \cite{St} if 
$$(u_1, u_2, \dots u_{n})\in \bbA_R \quad\text{and}\quad
(v_1, v_2, \dots v_{n})\in \bbB_R,$$  and using the anticoncentration inequality \eqref{AntiConc}
otherwise we obtain \eqref{MixLLT}.

\subsection{Local Large Deviations: $\alpha>\frac{1}{2}$}
Next we prove \eqref{LLTUpper} if $\alpha>\frac{1}{2}.$


\begin{proposition}
\label{PrLLTU-Ind}
If $I$ is an interval of unit size, then
 $$ \mu(\tau_k\geq t, \tau_k \in I)\leq \frac{\brC k\cL(t) }{t^{\alpha}\cR(k)}. $$
\end{proposition}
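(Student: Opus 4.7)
The plan is to split $\{\tau_k \in I\}$, where we may assume $I = [t, t+1]$, according to whether some summand $\tau \circ f^j$ exceeds a threshold $\eps t$ for a fixed small $\eps$ (say $\eps = 1/2$). This reflects the standard single-big-jump heuristic: for sums of i.i.d.\ variables with regularly varying tail of index $\alpha < 1$, the dominant contribution to $\{\tau_k \approx t\}$ comes from a single summand of size $\approx t$, while the remaining summands are controlled by anticoncentration.

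On the event $A = \{\exists\, j : \tau(f^j x) > \eps t\}$, the union bound combined with i.i.d.\ symmetry reduces matters to $k$ times the probability $\mu(\tau(f^0 x) > \eps t, \tau_k \in I)$. Conditioning on $\tau(f^0 x) = v$ and using independence of $\tau_{k-1}' := \sum_{j \ge 1} \tau(f^j x)$, this equals $k \int_{\eps t}^\infty \mu(\tau_{k-1}' \in I - v)\, dF_\tau(v)$, where $F_\tau$ is the law of $\tau$. Since $I - v$ is a unit interval, the anticoncentration inequality \eqref{AntiConc} applied to $\tau_{k-1}'$ yields $\mu(\tau_{k-1}' \in I - v) \le D/\cR(k-1) \le 2D/\cR(k)$ (using eventual monotonicity of $\cR$), while slow variation of $\cL$ gives $\mu(\tau > \eps t) \le C\cL(t)/t^\alpha$. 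Hence $\mu(A \cap \{\tau_k \in I\}) \le C\, k\, \cL(t)/(\cR(k)\, t^\alpha)$, which matches the target.

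On the complementary event $A^c$, $\tau_k$ equals the truncated sum $\tau_k^{\mathrm{tr}} := \sum_j \tau(f^j x)\, \mathbf{1}_{\tau(f^j x) \le \eps t}$ of bounded i.i.d.\ summands with variance $\sigma_\eps^2 \le C(\eps t)^{2-\alpha}\cL(t)$. In the range $k \le \cN(t)/\eps$ the mean $k\, \EXP[\tau^{\mathrm{tr}}]$ is a small fraction of $t$, so $\{\tau_k^{\mathrm{tr}} \ge t\}$ is a large deviation. A Bernstein/Cram\'er-type bound contributes an exponential factor of order $\exp(-c\, \cN(t)/(k\, \eps^{2-\alpha}))$, while a local CLT for the truncated sum bounds its density on unit intervals by $C/\sqrt{k\, \sigma_\eps^2}$; the product is dominated by the target $k\cL(t)/(\cR(k)\, t^\alpha)$. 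The main obstacle is this final comparison, which uses the scaling identity $\cR(k)^\alpha\, \cL(\cR(k)) \sim k$ together with the assumption $\alpha > 1/2$ to align the variance scale $\sqrt{k\, \sigma_\eps^2}$ with $\cR(k)$; equivalently, the claimed bound corresponds to $\beta_1 = \alpha$, $\beta_2 = -1$, $\beta_3 = 1$ in \eqref{LLTUpper}, and these exponents satisfy $\beta_2 + \beta_3/\alpha < 1$ in \eqref{Beta1-2} precisely when $\alpha > 1/2$.
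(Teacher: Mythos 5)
Your first part (the ``big jump'' event $A=\{\exists\,j:\tau(f^jx)>\eps t\}$) is correct: union bound over $j$, factor out the tail of a single summand using the regularly varying tail, and apply \eqref{AntiConc} to the sum over the remaining $k-1$ indices, which is independent of the big summand. This is essentially the same one--big--jump analysis that the paper carries out \emph{inside the proof of Theorem}~\ref{ThLD}, transplanted to the joint event.

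The gap is in the complementary ``no big jump'' event $A^c$, and it is a real one, even though you acknowledge it as ``the main obstacle.'' Two problems. First, the Bernstein exponent you wrote, $\exp(-c\,\cN(t)/(k\eps^{2-\alpha}))$, is the purely sub--Gaussian form, which only holds when the variance term dominates the ``max--term'' term in the Bernstein denominator, i.e.\ when $k\gtrsim\eps^{\alpha-1}\cN(t)$. Since $\eps^{\alpha-1}>1$, the entire range $k<\cN(t)$ that matters for \eqref{LLTUpper} lies in the Poissonian regime; there Bernstein's inequality gives only $\exp\bigl(-\tfrac{t^2}{2(k\sigma_\eps^2+\eps t\cdot t/3)}\bigr)\approx\exp(-c/\eps)$, a constant, and even the sharper Bennett bound gives only a polynomial $(\,k/\cN(t))^{c/\eps}$, not an exponential. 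Second, and more fundamentally, once you have $I\subset[t,\infty)$ you cannot bound $\mu(\tau_k^{\mathrm{tr}}\geq t,\ \tau_k^{\mathrm{tr}}\in I)$ by \emph{multiplying} a tail bound $\mu(\tau_k^{\mathrm{tr}}\geq t)$ with an anticoncentration bound $\mu(\tau_k^{\mathrm{tr}}\in I)\lesssim 1/\sqrt{k\sigma_\eps^2}$: these estimate the same random variable, so the product is not an upper bound for the joint event. To make the two factors multiply you need some independence to split on, and the standard local CLT density $1/\sqrt{k\sigma_\eps^2}$ is a bound near the center, not at the large--deviation point $t$; using it there requires either a local CLT with large--deviation corrections (not established) or a split.

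The paper avoids all of this with a cleaner decomposition. It splits $\tau_k=\tau_{k/2}+(\tau_k-\tau_{k/2})$ and uses $\{\tau_k\geq t\}\subset\{\tau_{k/2}\geq t/2\}\cup\{\tau_k-\tau_{k/2}\geq t/2\}$. For, say, the first event, independence of the two half--sums lets it write
\[
\mu\bigl(\tau_{k/2}\geq t/2,\ \tau_k\in I\bigr)
=\int_{v\geq t/2}\mu\bigl(\tau_k-\tau_{k/2}\in I-v\bigr)\,d\mu_{\tau_{k/2}}(v)
\leq \mu\bigl(\tau_{k/2}\geq t/2\bigr)\cdot\sup_{J\text{ unit}}\mu\bigl(\tau_{k/2}\in J\bigr),
\]
then apply \eqref{EqLD} (Theorem~\ref{ThLD}) to the first factor, giving $Ck\cL(t)/t^\alpha$, and \eqref{AntiConc} to the second, giving $C/\cR(k)$. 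Since \eqref{EqLD} already incorporates the single--big--jump analysis, no separate treatment of a ``no big jump'' event is needed. If you want to salvage your decomposition, you would in effect have to do this same halving inside the $A^c$ case, at which point the $A/A^c$ split becomes superfluous.
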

This gives \eqref{LLTUpper} with $\beta_1= \alpha$,
$\beta_2=-1,$ $\beta_3=1.$ Thus $\frac{\beta_3}{\alpha}+\beta_2=\frac{1}{\alpha}-1<1$ iff 
$\alpha>\frac{1}{2}.$

\begin{proof}
$$ \mu(\tau_k\geq t, \tau_k \in I)\leq 
\mu(\tau_{k/2}>t/2, \tau_{k}\in I)+\mu(\tau_k-\tau_{k/2}>t/2, \tau_{k}\in I). $$
By symmetry is suffices to consider the first term
$$\mu(\tau_{k/2}>t/2, \tau_{k}\in I)=\mu(\tau_{k/2}>t/2) \mu( \tau_k-\tau_{k/2}\in I-\tau_{k/2}
|\tau_{k/2}>t/2). $$
The first term is bounded by $\frac{Ck\cL(t)}{t^{\alpha}}$ due to \eqref{EqLD}
and the second term is bounded by $\frac{C}{\cR(k)}$ by \eqref{AntiConc}.
\end{proof} 

\subsection{Local Large Deviations: $\alpha\leq \frac{1}{2}$}

Here we obtain \eqref{LLTUpper2} under an additional assumption.

\begin{proposition}
\label{PrLLTLDDensity}
Suppose that for some (and hence all) $K$ 
\begin{equation}
\label{TailDensity}
\mu(\tau\in [t,t+K])\leq \frac{C(K)\cL(t)}{t^{1+\alpha}} 
\end{equation}
then 
\begin{equation}
\label{EqLLTLDDensity}
\mu(\tau_k>t, \tau_k\in I)\leq 
\frac{C_1 \cL(t) k }{t^{1+\alpha}}+\frac{C_2 }{t} 
\end{equation}
\end{proposition}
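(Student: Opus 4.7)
The plan is to refine the argument of Proposition~\ref{PrLLTU-Ind} by decomposing $\{\tau_k > t, \tau_k \in I\}$ according to the magnitudes of the summands $t_j = \tau\circ f^{j-1}$, using the density-type estimate \eqref{TailDensity} to upgrade the contribution of a ``single large summand'' from a tail bound to a density bound. The two terms of \eqref{EqLLTLDDensity} will arise from two different regimes of this decomposition.

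First, I would split the event into Case~A ($\max_j t_j > t/2$) and Case~B (all $t_j \le t/2$). For Case~A, apply a union bound over the index $j$ with $t_j > t/2$: conditioning on the remaining summands $(t_i)_{i\ne j}$ with sum $s$, the event reduces to $t_j \in (I-s)\cap(t/2,\infty)$, which is a unit interval contained in $(t/2,\infty)$. By \eqref{TailDensity} its $\mu$-measure is at most $C\cL(t)/t^{1+\alpha}$, and summing over the $k$ choices of $j$ yields $\mu(\text{Case A}) \le C_1 k\cL(t)/t^{1+\alpha}$, the first term of \eqref{EqLLTLDDensity}.

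For Case~B, work with the truncated sum $\tilde\tau_k = \sum_j t_j\mathbf{1}_{t_j\le t/2}$, which equals $\tau_k$ on this event. When $k\ge \cN(t)$ the anticoncentration estimate \eqref{AntiConc} applies to give $\mu(\tilde\tau_k \in I)\le C/\cR(k)\le C/t$, producing the second term. When $k<\cN(t)$, further decompose by the number $j\ge 1$ of summands in $(t/4,t/2]$: the constraint $\tau_k\in I$ pins one of these in a unit interval, to which \eqref{TailDensity} gives a density factor $\sim\cL(t)/t^{1+\alpha}$, while the remaining $j-1$ ``free'' moderately large summands each contribute a tail factor $\sim\cL(t)/t^{\alpha}$, and the combinatorial count is $\binom{k}{j}\cdot j\sim k^j/(j-1)!$. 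The $j$th contribution is thus of order $(Ck/\cN(t))^j/\big((j-1)!\,t\big)$; the $j=1$ term reproduces the first term of \eqref{EqLLTLDDensity}, while summing $j\ge 2$ gives a geometric series bounded by $C(k/\cN(t))^2\,e^{Ck/\cN(t)}/t\le C/t$ whenever $k\le\cN(t)$. The residual sub-event where all summands lie in $[0,t/4]$ is handled either by iterating the same decomposition at scale $t/8$, or by anticoncentration on the further-truncated sum once $k$ exceeds $\cN(t/4)$, which is comparable to $\cN(t)$.

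The main obstacle is the uniform control of Case~B across the range $k<\cN(t)$. Iterating the decomposition at successively finer thresholds $t/2^\ell$ naively produces geometric factors $2^{\ell(1+\alpha)}$ from the density estimate \eqref{TailDensity} at scale $t/2^\ell$, which a priori could overwhelm the target bound $C_2/t$. The decisive point is that the constraint $\tau_k\in I$ pins only one summand at each scale, so that the combinatorial counts together with the tail factors conspire to keep the series in $j$ (and in the iteration level $\ell$) convergent, uniformly in $k$.
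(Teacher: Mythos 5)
Your overall strategy is close to the paper's: truncate at dyadic scales, use the density assumption \eqref{TailDensity} on the one summand that \eqref{AntiConc} and the localization $\tau_k\in I$ pin down, and use tail bounds for the remaining large summands. Your Case~A estimate, your use of anticoncentration for $k\gtrsim\cN(t)$, and the idea that the small summands are handled by Markov plus anticoncentration (the paper's set $A^-$) are all in the right place.

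However, the iteration over scales as you set it up does \emph{not} close, and the ``decisive point'' you name (that $\tau_k\in I$ pins only one summand per scale) is not the fix: that pinning is already built into your $j$-sum and it is not enough. Concretely, label by $\ell$ the sub-event that all summands are $\le t/2^\ell$ and at least one lies in $(t/2^{\ell+1},t/2^\ell]$. Applying your estimate at scale $t/2^\ell$ gives, for the $j=1$ term alone, a bound of order $2^{\ell(1+\alpha)}\,k\cL(t)/t^{1+\alpha}$; for the $j\ge 2$ terms the tail factor at scale $t/2^{\ell+1}$ is $\approx 2^{(\ell+1)\alpha}\cL(t)/t^{\alpha}$, so the exponential factor from summing the $j$-series is $\exp\bigl(C\,2^{(\ell+1)\alpha}k/\cN(t)\bigr)$. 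Summing over $\ell$ up to the scale where either the exponential begins to blow up or the deterministic constraint $j+k>2^{\ell+1}$ kicks in produces, even in the best case, a bound of order $k^{2+\alpha}\cL(t)/t^{1+\alpha}$, which is not $O\bigl(k\cL(t)/t^{1+\alpha}+1/t\bigr)$ once $k$ exceeds roughly $\cN(t)^{1/(2+\alpha)}$. The issue is that ``at least one summand at scale $\ell$'' is too weak an event: for small $j$ at a low scale, the condition $\tau_k>t$ forces the \emph{smaller} summands to carry most of the mass $t$, and your union bound simply discards this constraint. The paper's decomposition fixes exactly this: it defines $A_l$ to be the event that the summands at scale $t/L^l$ contribute at least $t/(4\cdot 2^l)$ to $\tau_k$. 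Besides giving a clean partition of $\{\tau_k>t\}$ (since $t/4+t/4+\sum_l t/(4\cdot 2^l)=t$), this forces at least $m(l)\sim(L/2)^l$ summands at scale $l$, so the resulting bound $q_l$ satisfies $q_{l+1}/q_l\le 1/L$ and the sum over $l$ converges, with the $l=0$ term giving $k\cL(t)/t^{1+\alpha}$. Without forcing a \emph{growing} number of summands at finer scales, the level sum diverges; this is the key missing idea. A secondary gap: for the residual ``all small summands'' event you should combine a Markov bound (probability $\lesssim\cR(k)/t$ that the truncated sum is large) with anticoncentration ($\lesssim 1/\cR(k)$), whose product is $\lesssim 1/t$; anticoncentration alone only gives $1/\cR(k)$, which exceeds $1/t$ whenever $k<\cN(t)$.
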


This gives \eqref{LLTUpper} with 
$$\beta_{1,1}= 1+\alpha, \quad \beta_{1,2}=-1, \quad \beta_{1,3}=0, \quad
\beta_{2,1}=1, \quad \beta_{2,2}=\beta_{3,2}=0. $$

We note that in case $\tau$ is integer valued,
a stronger result, namely a precise asymptotics,
in the style of Theorem \ref{ThLD}, is proven in \cite{Don}.
It is likely that 
in case \eqref{TailDensity} holds with asymptotic equality,
a similar result holds in the present setting as well.
However,
the one sided bound established here is sufficient for our purposes.

\begin{proof}
We proceed as in the proof of Proposition \ref{PrLLTU-Ind}. Fix large constants $L$ and $r$
and define
$$ \tau_k^+=\sum_{j=0}^{k-1} \tau\circ f^j \chi_{\tau\circ f^j>t}, \quad
\tau_k^-=\sum_{j=0}^{k-1} \tau\circ f^j \chi_{\tau\circ f^j<\cR(k) L^r}, $$
$$ \tau_{k,l}=\sum_{j=0}^{k-1} \tau\circ f^j \chi_{\tau\circ f^j\in [t/L^{l+1}, t/L^{l}]}. $$
Let
$$ A^\pm=\{\tau_k^\pm>t/4, \tau_k\in I\}, \quad
A_l=\{\tau_{k,l}>t/(4\times 2^l), \tau_k\in I\}. $$
To estimate $\mu(A^+)$ let $\bj$ be the first index when $\tau\circ f^j>t.$
Conditioning on the values of $\tau\circ f^j$ for $j\neq \bj$ and using
\eqref{TailDensity} we see that for each $j_0$
$$ \mu(A^+, \bj=j_0)\leq \frac{C\cL(t) }{t^{1+\alpha}}. $$
Summing over $j_0$ we obtain
\begin{equation}
\label{ProbA+} 
\mu(A^+)\leq \frac{C\cL(t) k}{t^{1+\alpha}}.
\end{equation}
On $A_l$ there are at least $m(l)=\left(\frac{L}{2}\right)^l$ indices where
$\tau\circ f^j\in [t/L^{l+1}, t/L^{l}].$
Letting $\bj$ be the first such index, conditioning on the values of $\tau\circ f^j$
for $j\neq \bj$ and using the fact that the probability to have $m(l)-1$ high values for $j\neq \bj$
is $O\left(\left[ck\left(\frac{L^{l +1}}{t}\right)^\alpha \cL_l^*
\right]^{m(l)-1}\right)$ where
$$ \cL_l^*=\max_{s\in [t/L^{l+1}, t/L^l]} \cL(s) $$
we get that
$$ \mu(A_l)\leq q_l:=\brC k^{m(l)} (\cL_l^*)^{m(l)}
\left(\frac{c L^{l +1}}{t}\right)^{m(l)\alpha+1}.$$ 
Using that $\cL(t)$ is slowly varying and so $\cL_{l+1}^*=\cL_{l}^*(1+o(1))$ we obtain
$$ \frac{q_{l+1}}{q_l}=\left(\frac{c(k\cL^*_l)^{ 1/\alpha } 
L^{l +1}}{t}\right)^{\alpha(m(l+1)-m(l))} L^{\alpha m(l+1) +1}(1+o(1))\leq \frac{1}{L}$$
if $\dfrac{\left(k \cL_l^*\right)^{1/\alpha} L^{l+1}}{t}\leq \frac{1}{L^r}.$ 
Let $\ell_{k,t}$ be the smallest number such that 
\begin{equation}
\label{DefEll}
\frac{t}{L^{\ell+1}}<\frac{\left(k \cL_l^*\right)^{1/\alpha} L^r}{2} .
\end{equation}
Then $\sum_{l=1}^{\ell_{k,t}} q_l\leq C q_0$ and so
\begin{equation}
\label{ProbAl}
\mu\left( \bigcup_{l=0}^{\ell_{k,t}} A_l\right)\leq \frac{C \cL(t) k}{t^{1+\alpha}}.
\end{equation}
Note that by \eqref{DefEll}
$$ \frac{t_{\ell_{k,t}}^\alpha}{\cL(t_{\ell_{k,t}})} \times \left(\frac{2}{L^r}\right)^\alpha (1+o(1)) \leq k 
\quad\text{where}\quad t_l=\frac{t}{L^{l+1}}.$$
Since $\cR$ is monotone we have $\cR(k) L^r>t_\ell$ and hence
$$ \{\tau_k>t\}\subset A_+\cup A_-\cup\bigcup_{l=0}^{\ell_{k,t}} A_l. $$

Next
$$ \mu(A^-)\leq \mu\left(\sum_{j=0}^{k/2}\tau\circ f^j \chi_{\tau\circ f^j<\cR(k) L^r}>\frac{t}{8}, 
\tau_k\in I\right)$$
$$+
\mu\left(\sum_{j=k/2+1}^k\tau\circ f^j \chi_{\tau\circ f^j<\cR(k) L^r}>\frac{t}{8}, \tau_k\in I\right). $$
Let us estimate the first term, the second one is similar.
By Markov inequality
$$ \mu\left(\sum_{j=0}^{k/2}\tau\circ f^j \chi_{\tau\circ f^j<\cR(k) L^r}>\frac{t}{8}\right)
\leq\frac{8}{t} \mu\left(\sum_{j=0}^{k/2}\tau\circ f^j \chi_{\tau\circ f^j<\cR(k) L^r}\right) $$
$$ \leq C\frac{k \left(\cR(k)\right)^{1-\alpha}\cL(\cR(k))}{t}$$
where the last step relies on Karamata theory (\cite[Thm. 1.6.4]{BGT}). 

On the other hand by \eqref{AntiConc} 
$$ \mu\left(\tau_k\in I\Big|\sum_{j=0}^{k/2}\tau\circ f^j \chi_{\tau\circ f^j<k^{1/\alpha} L^r}>\frac{t}{8}\right)
\leq \frac{D}{\cR(k)}. $$
Combining the last two displays we obtain
$$ \mu\left(\sum_{j=0}^{k/2}\tau\circ f^j \chi_{\tau\circ f^j<k^{1/\alpha} L^r}>\frac{t}{8}, \tau_k\in I\right)
\leq \frac{\brD}{t} \times \frac{k\cL(\cR(k))}{\cR^\alpha(k)}. 
$$
and hence
$$\mu(A^-)\leq \frac{\tD}{t}\times \frac{k\cL(\cR(k))}{\cR^\alpha(k)}. $$
Now \eqref{R-T} gives
\begin{equation}
\label{ProbA-} 
\mu(A^-)\leq \frac{\hD}{t}.
\end{equation}
Combining \eqref{ProbA+}, \eqref{ProbAl}, and \eqref{ProbA-} we obtain the result.
\end{proof}

\section{LSV map.}
\label{sec:LSV}
\subsection{The result.}

Let $\tilde X = [0,1]$ and $\tilde f: \tilde X \to \tilde X$ be the map  
$$ \tilde f(x)=\begin{cases} x(1+(2x)^r) & \text{if } x\not\in X; \\
2x-1 & \text{if } x\in X \end{cases}
$$
where $X=[1/2, 1].$ 
Consider the special flow $\tilde g_t$ of $\tilde f$ under a roof function
$\tilde \tau$ which is positive and piecewise H\"older, in the sense, that its restrictions on
both $[0, \frac{1}{2})$ and $(\frac{1}{2}, 1]$ are H\"older. Let $\tilde \Omega$ be the phase space of this flow.
By \cite{LSV}, there is a unique (up to scaling) ergodic absolutely continuous $\tilde f$-invariant measure $\tilde \mu$
on $\tilde X$. We assume $r>1$. Then the invariant measure is infinite. Let us normalize it so that 
$\tilde \mu ([1/2,1] ) = 1$. Then $\tilde \nu$, defined by $d \tilde \nu (x,s)= d \tilde \mu(x) ds$
is an infinite invariant measure of $\tilde g_t$. 

\begin{theorem}
\label{propLSV}
Assume that $\tilde \tau$ is irrational. Then for any $\varepsilon >0$ and for any $\tilde A,\tilde B \subset [\varepsilon,1]$ 
with 
$\tilde \mu(\partial \tilde A) = \tilde \mu(\partial \tilde B) = 0$,
$0 < \tilde a_1 < \tilde a_2 < \inf \{ \tau(x), x \in \tilde A\}$,
$ 0 < \tilde b_1 < \tilde b_2 < \inf \{ \tau(x), x \in \tilde B\}$,
$\tilde{\cA}= \tilde A\times [\tilde a_1, \tilde a_2],$ $\tilde{\cB}= \tilde B\times [\tilde b_1, \tilde b_2]$ we have
\begin{equation}
\label{IMMix2}
\tilde  \nu(\tilde{\cA}\cap \tilde g_{-t} \tilde \cB) t^{1-1/r}\to \hc \tilde{\nu}(\tilde \cA) \tilde \nu(\tilde \cB)
\end{equation}
\end{theorem}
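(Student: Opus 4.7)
The plan is to reduce to the first-return map of $\tilde f$ to $X=[1/2,1]$ and then apply Proposition \ref{PrLLT-RenP}. Let $\phi:X\to\mathbb N$ denote the first return time, $f=\tilde f^\phi$ the first return map, $\mu=\tilde\mu|_X$ (a finite $f$-invariant measure by our normalisation), and $\tau(x)=\sum_{j=0}^{\phi(x)-1}\tilde\tau(\tilde f^j x)$. The suspension flow $g_t$ with base $(X,f,\mu)$ and roof $\tau$ on $\Omega=\{(x,s):x\in X,\,0\leq s<\tau(x)\}$ is canonically conjugate to $\tilde g_t$, and under this conjugacy the set $\tilde{\mathcal A}=\tilde A\times[\tilde a_1,\tilde a_2]$ with $\tilde A\subset[\varepsilon,1]$ pulls back to a finite disjoint union of products $A\times[a_1,a_2]$ with $A\subset X$ and $\mu(\partial A)=0$, since a forward orbit from $X$ visits $[\varepsilon,1/2)$ only a bounded number of times depending on $\varepsilon$. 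It therefore suffices to establish \eqref{IMMixP} for $g_t$.

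For the tail hypothesis, the standard LSV analysis near the neutral fixed point (reducing to the continuous-time approximation $\dot x=2^r x^{r+1}$) gives $\mu(\phi>n)\sim c\,n^{-1/r}$, and since $\tilde\tau$ is bounded above and below by positive constants one obtains $\mu(\tau>t)\sim\bc\, t^{-1/r}$, so \eqref{TailP} holds with $\alpha=1/r$. For the local limit theorem I would apply the Nagaev--Guivarc'h method to the twisted transfer operator $\mathcal L_{is}\varphi(x)=\sum_{fy=x}\frac{e^{is\tau(y)}}{|f'(y)|}\varphi(y)$ on a space of piecewise H\"older (or bounded variation) functions on the Gibbs--Markov base $f$. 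The tail asymptotics above feed into the leading-eigenvalue expansion $\lambda(is)=1-c_\star|s|^{1/r}(1+o(1))$, from which standard Fourier inversion of $\mathcal L_{is}^k$ yields \eqref{MixLLTP} in the aperiodic case or \eqref{LLTLatP} in the periodic irrational case, as well as \eqref{CharBound} and hence the anticoncentration bound \eqref{AntiConc}. Irrationality of $\tilde\tau$ over $\tilde f$ passes to $\tau$ over $f$ by the usual inducing--cohomology argument: a rational cohomology $\tau=\ba+h\psi+\mathfrak h-\mathfrak h\circ f$ with $\ba/h\in\mathbb Q$ would lift along the finite fibres of the tower to one for $\tilde\tau$.

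For the large deviation bound \eqref{LLTUpper2P} I would verify the quasi-independence \eqref{QILarge}: large values of $\tau\circ f^j$ correspond to $f^j x$ landing in a small interval near $1/2$, and exponential decay of correlations of $f$ on BV observables makes such events quasi-independent in the required sense, so Theorem \ref{ThLD} provides the needed estimate on $\mu(\tau_k>t)$. Combining this with \eqref{AntiConc} and arguments parallel to those in Section \ref{ScInd}, one obtains \eqref{LLTUpper2P}: the proof of Proposition \ref{PrLLTU-Ind} adapts when $r<2$ (so $\alpha>1/2$), while for $r\geq 2$ one follows the pattern of Proposition \ref{PrLLTLDDensity}, verifying \eqref{TailDensity} from the smoothness of the invariant density of $\tilde f$ on $[1/2,1]$ together with the explicit first-return structure.

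The main obstacle I anticipate is twofold. First, the mixing LLT step requires extracting the non-analytic expansion of $\lambda(is)$ around $s=0$ driven by the heavy tail, uniformly in $s$, together with controlling the aperiodic versus periodic-irrational dichotomy for $\tau$ from the irrationality assumption on $\tilde\tau$. Second, the large deviation arguments of Section \ref{ScInd} are phrased for i.i.d.\ sums, and their adaptation to the Markov setting demands systematically replacing independence by exponential decay of correlations; this costs additional constants but does not change the order of growth. Given these steps, Proposition \ref{PrLLT-RenP} yields \eqref{IMMixP} for $g_t$ and hence \eqref{IMMix2} for $\tilde g_t$ via the conjugacy of the first step.
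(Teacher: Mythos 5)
Your overall architecture (reduce to the induced system $(X,f,\mu,\tau)$, verify \eqref{TailP}, a mixing LLT, anticoncentration, quasi-independence and local large deviations, then invoke Proposition~\ref{PrLLT-RenP}) matches the paper's broad plan. But there is a genuine gap in your very first reduction step, and it is precisely the difficulty the paper singles out as requiring ``additional analysis.''

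You claim that $\iota^{-1}(\tilde{\cA})$ is a finite disjoint union of products $A\times[a_1,a_2]$, arguing that a forward orbit from $X$ visits $[\eps,1/2)$ only boundedly many times. The bounded-visit observation is correct, but the conclusion does not follow. Tracing the factor map through the first-return tower, for $y\in Y_n=(y_{n+1},y_n]$ with $n\geq 1$ the fibre $\iota^{-1}(y,s')$ is a \emph{countably infinite} set: for every $j\geq 1$ there is a unique $x\in X_{n+j}=\{R=n+j\}$ with $\tilde f^{j}x=y$, contributing the point $(x,\,\sum_{i<j}\tilde\tau(\tilde f^i x)+s')$ to the fibre, and these $s$-coordinates run off to infinity. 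Consequently $\iota^{-1}(\tilde\cA)$ is a genuinely infinite union over the cylinder index $j$, and moreover each $j$-piece is not a product set: the $s$-interval is $[T_j(x)+\tilde a_1,\,T_j(x)+\tilde a_2]$ with $T_j(x)=\sum_{i<j}\tilde\tau(\tilde f^i x)$ depending on $x$. In particular $\iota^{-1}(\tilde\cA)$ is \emph{not} contained in any $\Omega_{\leq M}$. The paper notes exactly this point: bounding $\tA,\tB\subset[\eps,1]$ controls the \emph{forward} return time to $X$, whereas Proposition~\ref{thm1} and Remark~\ref{remark:port} require boundedness of the \emph{backward} return time, and as $\tilde f$ is non-invertible these are very different. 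So Proposition~\ref{thm1} cannot be applied directly. The paper closes this gap with a separate argument (``Mixing away from the origin''): partition $\tilde\Omega$ into levels $\tOmega^n$, and run a three-step induction (Step 1: $\tcA,\tcB\subset\hOmega^0$ via Proposition~\ref{thm1}; Step 2: push $\tcB$ down one level at a time using the flow and inclusion--exclusion; Step 3: use bounded distortion to transport $\tcA_l$ forward by $t_l^+$ and squeeze). Your proposal has no substitute for this step, so it does not yet prove Theorem~\ref{propLSV}.

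On the remaining ingredients your plan is basically sound, though the paper proceeds somewhat differently. You propose to rederive the LLT via the Nagaev--Guivarc'h twisted transfer operator with a non-analytic eigenvalue expansion; the paper instead cites Aaronson--Denker \cite{AD} directly for the aperiodic LLT, for the periodic case (their Theorem 6.5), for the characteristic-function bound \eqref{CharBound}, and for the translation between their notion of aperiodicity and Definition~\ref{def:per}. Your appeal to ``exponential decay of correlations of $f$ on BV observables'' for quasi-independence is not quite what is used: the sets $\{\tau\circ f^j>t\}$ are unions of 1-cylinders, and the paper uses the Gibbs--Markov bound \eqref{GM} (and its iterate \eqref{GM-l}) rather than a correlation decay statement. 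Finally, your plan to ``follow the pattern of Proposition~\ref{PrLLTLDDensity}'' for $r\geq 2$ is directionally right, but in the Markov setting the proof has to be redone at the level of cylinders; this is carried out in Proposition~\ref{PrLocLLD-LSV}, which replaces the conditioning arguments of Proposition~\ref{PrLLTLDDensity} with cylinder counting using~\eqref{GM-l}.
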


Recall from Section \ref{ScAbs} that the irrationality condition
is necessary for \eqref{IMMix2}. We also note that irrationality holds for typical
roof functions $\tilde\tau.$ In particular,
a sufficient condition for the irrationality of $\tilde \tau$ 
is that there are two periodic orbits for the flow $\tilde g$,
the ratio of whose periods is irrational, see, for example, the discussion in
\cite[page 394]{G-Reg}.

To reduce Theorem \ref{propLSV} to our setting we note that $\tilde g$
 can be represented 
as a special flow over the first return map $f: X\to X$.
Specifically, let $R(x) = \min \{ n \geq 1: \tilde f^n (x) \in X\}$ be the first return time to $X$ 
and let $f :X \to X$, $f (x) = \tilde f^{R(x)} (x)$ be the first return map. Let us also extend the definition of $R$ to
$\tilde X \setminus X$ with the same formula (first hitting time).
For a function $\phi: \tilde X \to \mathbb R$, let $\phi_X: X \to \mathbb R$ be defined by
$\phi_X(x) = \sum_{i=0}^{R(x) -1} \phi (\tilde f^i (x))$. Now define the roof function $\tau: X \to \mathbb R_+$
by $\tau = (\tilde \tau)_X$. As before, $g_t$ is the special flow under roof function $\tau$,
$\Omega$ is its phase space and $\nu$ with $d \nu(x,s) = d \mu(x) ds$ is a $g_t$-invariant measure.
As the homomorphism $\iota : \Omega \rightarrow \tilde \Omega,$ $\iota(x,s) = (x,s)$ (with the usual identification
$(x, \tilde \tau (x)) = (\tilde f (x) , 0)$) shows,
$(\tilde \Omega, \tilde \nu, \tilde g_t)$
is a factor of
$(\Omega, \nu, g_t)$. Note that $\iota$ is not invertible.

A cylinder of length $n$ (or shortly, $n$-cylinder) is a set
$$ \{x \in X: R(x)=m_1, R(fx)=m_2\dots R(f^{ n-1} x)=m_n \}. $$
Let us consider the topology on $X$ generated by the cylinder sets.
Let us also fix a metric $d(x,y) = \theta^{s(x,y)}$, where
$s(x,y)$ is the smallest $n$ so that $x$ and $y$ belong to different
cylinders of length $n$ and $\theta <1$ is sufficiently close to $1$. 

\begin{lemma}
\label{lem:rational}
$\tau$ is rational if and only if $\tilde \tau$ is rational.
\end{lemma}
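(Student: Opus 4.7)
The plan is to push the cohomological identity defining rationality back and forth between $\tilde f$ and its first return map $f$, exploiting that $\tau$ is the Birkhoff sum of $\tilde\tau$ along excursions away from $X$. Throughout, let $n(y)$ be the first hitting time of $X$ from $y\in\tilde X$ (so $n(y)=0$ iff $y\in X$); this is finite $\tilde\mu$-a.e. The easy direction ``$\tilde\tau$ rational $\Rightarrow$ $\tau$ rational'' goes as follows. If $\tilde\tau = h\tilde\psi + \tilde{\mathfrak{h}} - \tilde{\mathfrak{h}}\circ\tilde f$, then telescoping gives $\tilde\tau_n = h\tilde\psi_n + \tilde{\mathfrak{h}} - \tilde{\mathfrak{h}}\circ\tilde f^n$ with $\tilde\psi_n$ still integer-valued, and specializing at $x \in X$ with $n=R(x)$, so that $\tilde f^{R(x)}x = fx$ and $\tilde\tau_{R(x)}(x) = \tau(x)$, yields $\tau(x) = h\psi(x) + \mathfrak{h}(x) - \mathfrak{h}(fx)$ with $\psi(x):=\tilde\psi_{R(x)}(x)\in\mathbb{Z}$ and $\mathfrak{h}:=\tilde{\mathfrak{h}}|_X$.

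For the converse, given $\tau = h\psi + \mathfrak{h} - \mathfrak{h}\circ f$ on $X$, I would extend $\mathfrak{h}$ to $\tilde X$ by
$$\tilde{\mathfrak{h}}(y) := \mathfrak{h}\bigl(\tilde f^{n(y)} y\bigr) + \sum_{j=0}^{n(y)-1}\tilde\tau(\tilde f^j y)$$
(empty sum when $y\in X$), and set $\tilde\psi(y) = \psi(y)$ for $y\in X$, $\tilde\psi(y)=0$ otherwise. Both are measurable since $n$, $R$, $\psi$, $\mathfrak{h}$, $\tilde\tau$ are. Verifying the coboundary identity $\tilde\tau = h\tilde\psi + \tilde{\mathfrak{h}} - \tilde{\mathfrak{h}}\circ\tilde f$ reduces to a short case check. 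When $y\notin X$, the definition makes the sum telescope, yielding $\tilde{\mathfrak{h}}(y) - \tilde{\mathfrak{h}}(\tilde f y) = \tilde\tau(y)$ whether or not $\tilde f y \in X$, and $\tilde\psi(y)=0$. When $y\in X$, one has $\tilde{\mathfrak{h}}(y)=\mathfrak{h}(y)$ and, using $n(\tilde f y) = R(y)-1$ and $\tilde f^{n(\tilde f y)}(\tilde f y) = fy$,
$$\tilde{\mathfrak{h}}(\tilde f y) = \mathfrak{h}(fy) + \sum_{j=1}^{R(y)-1}\tilde\tau(\tilde f^j y);$$
combining the hypothesis $\mathfrak{h}(y)-\mathfrak{h}(fy) = \tau(y)-h\psi(y)$ with the defining relation $\tau(y) = \tilde\tau(y) + \sum_{j=1}^{R(y)-1}\tilde\tau(\tilde f^j y)$ gives $\tilde{\mathfrak{h}}(y) - \tilde{\mathfrak{h}}(\tilde f y) = \tilde\tau(y) - h\psi(y)$, as required.

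No real analytic obstacle arises: both implications are purely algebraic cocycle manipulations, valid a.e.\ because $n$ is a.e.\ finite (equivalently, the first return map $f$ is defined a.e.\ on $X$). Note also that rationality of $\tilde\tau$ forces the same constant $h$ for $\tau$ and conversely, which is consistent with the fact that the suspension flow $\tilde g$ is a factor of $g$ and rational roof functions over a common base produce the same lattice. The only step requiring care is the case analysis in the converse, but each sub-case collapses immediately upon unwinding the definition of $\tilde{\mathfrak{h}}$.
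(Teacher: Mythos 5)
Your proof is correct and essentially reproduces the paper's argument: after the same telescoping in the easy direction, your explicit transfer function $\tilde{\mathfrak h}(y)=\mathfrak h(\tilde f^{n(y)}y)+\sum_{j<n(y)}\tilde\tau(\tilde f^j y)$ together with $\tilde\psi=\psi\cdot 1_X$ is exactly the $\mathfrak h'+\mathfrak h''$ and $\psi'$ that the paper assembles in two steps via the auxiliary cocycle $\tau'=b\psi'+\mathfrak h'-\mathfrak h'\circ\tilde f$. The only difference is expository: you write the final formula down directly and verify it by case analysis, whereas the paper reaches it through the intermediate observation that $\tilde\tau-\tau'$ is a coboundary of the form $\mathfrak h''-\mathfrak h''\circ\tilde f$.
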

\begin{proof}
Assume that $\tilde \tau = b \tilde \psi + \mathfrak h - \mathfrak h \circ \tilde f$ holds on $\tilde X$
with $\tilde g :\tilde X \to \mathbb Z$.
Then by definition, $\tau = (\tilde \tau)_X = b (\tilde \psi )_X + \mathfrak h - \mathfrak h \circ f$ on $X$. Thus $\tau$
is rational if $\tilde \tau$ is rational. 

Next, assume that 
$\tau = b \psi + \mathfrak h - \mathfrak h \circ f$ on $X$. Let us define the functions $\psi', \mathfrak h ' : \tilde X \to 
\mathbb R$ by
$\psi ' (x) = \psi(x) 1_{\{x \in X\}}$ and $ \mathfrak h '(x) = \mathfrak h(x) 1_{\{x \in X\}}$. Let $\tau': \tilde X \to 
\mathbb R$ be defined by $\tau' =b \psi' + \mathfrak h ' - \mathfrak h ' \circ \tilde f$. Observe that by construction,
$\tau'_X = \tau$ on $X$. In general 
$\tau' $ may not be equal to $ \tilde \tau$ on $\tilde X$, but we have
$\tilde \tau - \tau ' = \mathfrak h '' - \mathfrak h '' \circ \tilde f$ on $\tilde X$, where
$\mathfrak h '' :\tilde X \to \mathbb R$ satisfies
 $\mathfrak h ''(x) = \sum_{i=0}^{R(x) - 1} \tilde \tau ( \tilde f^i(x))- \tau '  ( \tilde f^i(x))$.
We conclude that 
$\tilde \tau = b \psi' + \mathfrak h ' +  \mathfrak h '' - \mathfrak h ' \circ \tilde f- \mathfrak h '' \circ \tilde f$.
Thus $\tilde \tau$
is rational if $ \tau$ is rational. 
\end{proof}

\begin{proposition}
\label{thm1}
Assume that $\tau$ in irrational. Then for any $A,B \subset X$ with $\mu(\partial A) = \mu(\partial B) = 0$,
$\cA=A\times [a_1, a_2],$ $\cB=B\times [b_1, b_2]$ we have
\begin{equation}
\label{IMMix3}
 \nu(\cA\cap g_{-t} \cB) t^{1-1/r}\to \hc \nu(\cA) \nu(\cB).
\end{equation}
\end{proposition}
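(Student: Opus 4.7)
The plan is to reduce Proposition \ref{thm1} to an application of Proposition \ref{PrLLT-RenP} to the Gibbs--Markov system $(X,\mu,f)$ with roof function $\tau = (\tilde\tau)_X$ and $\alpha = 1/r$ (noting that $1 - 1/r = 1 - \alpha$ matches the claimed exponent). By Lemma \ref{lem:rational}, the irrationality of $\tilde\tau$ transfers to $\tau$, so we are in either the aperiodic or the periodic irrational case of that proposition. Thus it suffices to verify (a) the power-law tail \eqref{TailP}, (b) the relevant mixing local limit theorem \eqref{MixLLTP} or \eqref{LLTLatP}, and (c) the local large deviation bound \eqref{LLTUpper2P}.

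For (a), the first return time $R$ to $X = [1/2,1]$ satisfies $\mu(R > n) \sim c_0 n^{-1/r}$ by the classical asymptotic analysis of the LSV map near the indifferent fixed point at $0$. Since $\tilde\tau$ is H\"older at $0$ and the orbit $\tilde f^i x$, $i < R(x)$, spends asymptotically all of its time in an arbitrarily small neighborhood of $0$ when $R(x)$ is large, we obtain $\tau(x) = \tilde\tau(0) R(x)(1 + o(1))$ on $\{R > n\}$, whence $\mu(\tau > t) \sim \bc\, t^{-1/r}$ for an explicit constant $\bc$. For (b), the return map $f$ is a full-branched Gibbs--Markov map whose one-step cylinders are the level sets of $R$, and $\tau$ is piecewise H\"older on these cylinders in the Gibbs--Markov sense. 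The mixing local limit theorem (both versions) then follows from the standard spectral analysis of the transfer operator of $f$ twisted by $e^{is\tau}$ for Gibbs--Markov maps with a roof function of regularly varying tails of index $\alpha \in (0,1)$, as developed by Aaronson--Denker and Gou\"ezel; sets $A, B$ with $\mu(\partial A) = \mu(\partial B) = 0$ are handled via cylinder approximation of $1_A$ and $1_B$.

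For (c), the event $\{\tau \circ f^j > t\}$ is essentially a union of cylinders at level $j$, and the exponential decay of correlations of $f$ on H\"older observables yields the quasi-independence hypothesis \eqref{QILarge}; Theorem \ref{ThLD} then gives the global bound \eqref{EqLD}. The local bound \eqref{LLTUpper2P} is obtained by mimicking the proof of Proposition \ref{PrLLTLDDensity}: the density-type tail $\mu(\tau \in [t, t+K]) \le C/t^{1+1/r}$ follows from the fact that on each branch of $f$ the return time $R$ has a density with the required decay, the anticoncentration inequality \eqref{AntiConc} follows from the characteristic function bound \eqref{CharBound} via the spectral gap of the untwisted transfer operator, and the multi-scale decomposition of Proposition \ref{PrLLTLDDensity} then goes through. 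The main obstacle is carrying out this multi-scale argument cleanly when $r \ge 2$ (so that $\alpha \le 1/2$), since one must control the joint probability of many moderate-sized values $\tau \circ f^j$---a step that in the i.i.d.\ setting used independence but here requires uniform mixing estimates over collections of cylinders whose number grows polynomially in $t$. With \eqref{LLTUpper2P} in hand, Proposition \ref{PrLLT-RenP} delivers \eqref{IMMix3}.
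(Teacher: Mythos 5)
Your proposal follows essentially the same route as the paper: reduce the flow statement to Proposition~\ref{PrLLT-RenP} for the induced Gibbs--Markov system $(X,\mu,f)$ with $\alpha=1/r$, transfer irrationality via Lemma~\ref{lem:rational}, and then verify (a) the tail, (b) the mixing LLT via Aaronson--Denker, and (c) the anticoncentration and local large deviation bounds. That is the paper's two-step structure exactly.

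The one place where you identify an ``obstacle'' and leave it open --- controlling the joint probability of many moderate values $\tau\circ f^j$ in the multi-scale decomposition when $\alpha\le 1/2$ --- is precisely what the paper resolves, so let me name the mechanism. The key is the multi-cylinder Gibbs--Markov product inequality
\begin{equation*}
\mu\bigl(\cC_1\cap f^{-j_1}\cC_2\cap\cdots\cap f^{-j_1-\cdots-j_{l-1}}\cC_l\bigr)\le K^{l-1}\,\mu(\cC_1)\cdots\mu(\cC_l),
\end{equation*}
valid whenever $\text{length}(\cC_m)\le j_m$, which is \eqref{GM-l} in the paper (it follows by iterating \eqref{GM}). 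This replaces the independence used in every step of Proposition~\ref{PrLLTLDDensity}: in bounding $\mu(A^+)$ one decomposes the itinerary into three compatible cylinders $\cC_1,\cD,\cC_2$; in bounding $\mu(A_l)$ one uses $2m(l)+1$ cylinders alternating between ``high'' $1$-cylinders $\cD_1,\dots,\cD_{m(l)}$ and complementary blocks $\cC_1,\dots,\cC_{m(l)+1}$, at a cost of a factor $K^{2m(l)}$ that is absorbed into the geometric decay of $q_l$; and in bounding $\mu(A^-)$ one combines a Markov-inequality estimate on one half-block with \eqref{AntiConc} applied to the other half. Note also that you attribute \eqref{QILarge} to ``exponential decay of correlations on H\"older observables,'' but $\{\tau\circ f^j>t\}$ is a union of $1$-cylinders, not a H\"older set; the clean justification is the two-cylinder bound \eqref{GM} directly. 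With \eqref{GM-l} in hand, your sketch closes and matches Proposition~\ref{PrLocLLD-LSV}.
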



First, we prove Proposition \ref{thm1} and then derive Theorem \ref{propLSV} from Proposition \ref{thm1} and Lemma \ref{lem:rational}.

\subsection{Special flow over the induced system.}
\begin{proof}[Proof of Proposition \ref{thm1}]
The proof of Proposition \ref{thm1} is 
divided into two steps. In Step 1, we check that either \eqref{MixLLTP} or \eqref{LLTLatP} holds.
In Step 2, we check that \eqref{LLTUpper2P} holds. By the results 
of Section \ref{sec:res} (with $\alpha = 1/r$), these will imply Proposition \ref{thm1}.

{\bf Step 1: Checking \eqref{MixLLTP} and \eqref{LLTLatP}.}

First, we note that by \cite{AD}, \eqref{MixLLTP} holds if 
$\tau :X \rightarrow \mathbb R$ is aperiodic. According to \cite{AD}, the function 
$\tau :X \rightarrow \mathbb R$ is aperiodic if there is no $\lambda \in \mathcal S^1$ 
(here, $\mathcal S^1$ is the complex unit circle) and measurable function
$\mathfrak g: X \rightarrow \mathcal S^1$ (other than the trivial $\lambda = 1$, $\mathfrak g = 1$) that
would satisfy
\begin{equation}
\label{per:ad}
e^{it \tau (x)} = \lambda \mathfrak g(x) / \mathfrak g(fx).
\end{equation}
First, we observe that this definition coincides with ours. 
Indeed, if 
$\tau(x) = \ba + \mathfrak h (x) - \mathfrak h (f(x)) + \frac{2 \pi}{t} \psi(x)$ with $\psi: X \rightarrow \mathbb Z$,
then \eqref{per:ad} holds with $\mathfrak g = e^{i t \mathfrak h }$. Conversely, assume that \eqref{per:ad}
holds. Then, by 
 Corollary
2.2 of \cite{AD}, $\mathfrak g$ is H\"older. Next, we define a H\"older
function $\mathfrak h$ which satisfies $e^{i \mathfrak h} = \mathfrak g$.
By the H\"older property of $\mathfrak g$, there is some $K$
such that the oscillation of $\mathfrak g$ on $K$-cylinders is less than
$\sqrt 2$. For any $K$-cylinder $\xi$, fix some $x_{\xi} \in \xi$ and 
define $\mathfrak h (x_\xi)$ as the only number in $[0,2 \pi)$
that satisfies  $e^{i \mathfrak h(x_\xi)} = \mathfrak g (x_\xi)$.
Then for any $y \in \xi$, we choose the unique $\mathfrak h(y)$ which satisfies
$|\mathfrak h(y) - \mathfrak h(x_\xi) | < \pi$ 
and $e^{i \mathfrak h(y)} = \mathfrak g (y)$.
By construction, $\mathfrak h$ is H\"older.
We have now $\tau (x) = \ba + \mathfrak h(x) - \mathfrak h(f(x)) + \psi(x)$,
where $\ba = - \frac{i}{t} \log \lambda$ and 
$\psi : X \rightarrow \frac{2 \pi}{t} \mathbb Z.$
 Hence \eqref{EqPeriodic} holds, so the definition of \cite{AD} is equivalent to ours.
It follows that \eqref{MixLLTP} holds in the aperiodic case. 

Let us now assume that $\tau$ is periodic irrational. 
By the previous paragraph, we can assume that $\mathfrak h $ is H\"older. In order to verify
\eqref{LLTLatP}, it is
enough to consider test functions of the form 
$\phi(x,y,z) = 1_{x \in \mathcal C} 1_{y \in \mathcal D} \phi (z)$,
where $\mathcal C$ and $\mathcal D$ are cylinders and $\phi(z)$ is 
compactly supported. Then \eqref{LLTLatP} follows from Theorem 6.5 of
\cite{AD}, applied to $\psi$, and from the continuous mapping theorem.

{\bf Step 2: Checking \eqref{LLTUpper2P}.}

We note that \eqref{CharBound} is verified in \cite{AD}. In particular \eqref{AntiConc} holds.

The Gibbs-Markov property of $f$ implies that there is a constant $K$ such that if $\cC_1, \cC_2$ are cylinders and
the length of $\cC_1$ is less than $j$ 

\begin{equation}
\label{GM}
\mu(\cC_1\cap f^{-j} \cC_2)\leq K \mu(\cC_1)\mu(\cC_2). 
\end{equation}
Also applying  \eqref{GM} inductively we see that if $\cC_1, \cC_2\dots \cC_l$ are cylinders and 
$j_1, j_2\dots j_{l-1}$ are numbers with length$(\cC_m)\leq j_m$ then
\begin{equation}
\label{GM-l}
\mu(\cC_1\cap f^{-j_1} \cC_2\dots \cap f^{ -j_1 - ... -j_{l-1}} \cC_l)
\leq K^{l-1} \mu(\cC_1)\mu(\cC_2)\dots \mu(\cC_l) 
\end{equation}
 
In particular \eqref{QILarge} holds and so \eqref{EqLD} is satisfied. This allows to check \eqref{LLTUpper}
in case $r<2$ and so $\alpha>\frac{1}{2}.$ In the general case we verify 
\eqref{LLTUpper2P} which is the consequence of the Proposition \ref{PrLocLLD-LSV} 
below.
\end{proof}

\subsection{Local Large Deviations}
\begin{proposition}
\label{PrLocLLD-LSV} 
$ \displaystyle \mu(\tau_k>t, \tau_k\in I)\leq 
\frac{C_1 k}{t^{1+\alpha}}+\frac{C_2 }{t} .$
\end{proposition}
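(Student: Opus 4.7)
The plan is to mimic the proof of Proposition \ref{PrLLTLDDensity} from the independent setting, with the Gibbs--Markov inequality \eqref{GM-l} playing the role of independence. Because the power-tail case applies (\eqref{TailP} with $\bc$ a constant and $\alpha = 1/r$), it suffices to produce a bound of the form \eqref{LLTUpper2P} with exponents $(\gamma_{1,1},\gamma_{2,1}) = (1+\alpha,1)$ and $(\gamma_{1,2},\gamma_{2,2}) = (1,0)$, both of which satisfy \eqref{Gamma1-2}. Throughout, I would reuse the decomposition
$$
\tau_k^+ = \sum_j \tau\circ f^j \chi_{\tau\circ f^j > t},\quad
\tau_k^- = \sum_j \tau\circ f^j \chi_{\tau\circ f^j < \cR(k)L^r},\quad
\tau_{k,l} = \sum_j \tau\circ f^j \chi_{\tau\circ f^j \in [t/L^{l+1},t/L^l]},
$$
together with the events $A^\pm, A_l$ from the proof of Proposition \ref{PrLLTLDDensity}, and the same covering
$\{\tau_k > t,\tau_k\in I\}\subset A^+\cup A^-\cup\bigcup_{l=0}^{\ell_{k,t}} A_l$.

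The first preliminary step is to establish the LSV analogue of \eqref{TailDensity}, namely $\mu(\tau\in[t,t+K])\leq C(K)/t^{1+\alpha}$. This reduces to a computation on the 1-cylinders $\{R=m\}$: by the piecewise H\"older assumption on $\tilde\tau$ and the explicit LSV branch structure, $\tau|_{\{R=m\}}$ has bounded oscillation around a specific value $T(m)\asymp m$, while $\mu(R=m)\asymp m^{-1-\alpha}$ by \cite{LSV}. Hence $\{\tau\in[t,t+K]\}$ meets only $O(1)$ such cylinders with $m\asymp t$, giving the density bound.

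The second and main step is to recover the factorization used in \cite{PrLLTLDDensity} (i.e. Proposition \ref{PrLLTLDDensity}) via \eqref{GM-l}. The crucial observation is that each of the events $\{\tau\circ f^j>H\}$ and $\{\tau\circ f^j\in[t/L^{l+1},t/L^l]\}$ is a union of 1-cylinders (pulled back by $f^j$), so \eqref{GM-l} applies with the length condition automatically satisfied (since distinct indices $j_1<\dots<j_{m(l)}$ are at distance at least one apart). This yields
$$
\mu\Bigl(\bigcap_{i=1}^{m(l)}\{\tau\circ f^{j_i}\in[t/L^{l+1},t/L^l]\}\Bigr)
\leq K^{m(l)-1}\bigl(\mu(\tau\in[t/L^{l+1},t/L^l])\bigr)^{m(l)}.
$$
For the bound on $\mu(A_l)$, we additionally use the constraint $\tau_k\in I$ to gain one extra factor of $L^{l+1}/t$ on a single distinguished large index: conditionally on the other $m(l)-1$ large indices and on the small increments, the distinguished large value is forced into a unit interval, to which the density bound applies. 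This is the \emph{only} place where the density estimate is needed, and it yields exactly the summable geometric series of the independent case, so $\mu(\bigcup_{l}A_l)\leq C\, k/t^{1+\alpha}$. The bound on $\mu(A^+)$ is handled similarly by conditioning on the unique large index $\bj$ via \eqref{GM} and summing over $\bj\in\{0,\dots,k-1\}$.

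For $\mu(A^-)$, no quasi-independence beyond linearity is needed: Markov's inequality combined with the Karamata estimate $\mu(\tau\chi_{\tau<H})\asymp H^{1-\alpha}$ controls the probability that the truncated sum exceeds $t/8$, while \eqref{AntiConc}, which we already have from \eqref{CharBound} proved in \cite{AD}, controls the probability that $\tau_k\in I$ given that the complementary sum is large. This gives $\mu(A^-)\leq C/t$, contributing the second term of the claimed bound. I expect Step 2 -- specifically, verifying that all sets arising from level-conditions on $\tau\circ f^j$ admit a clean 1-cylinder decomposition to which \eqref{GM-l} applies without losing the product structure needed for the geometric summation -- to be the main technical obstacle; the remaining estimates are essentially bookkeeping modeled on the independent case.
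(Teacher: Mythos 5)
Your proposal is correct and takes essentially the same route as the paper's proof: both establish the density bound \eqref{TailDensity} for $\tau$ (with $\alpha = 1/r$, from \cite{LSV}) and then transplant the decomposition of Proposition \ref{PrLLTLDDensity} using the Gibbs--Markov inequality \eqref{GM-l} in place of independence, extracting an additional density factor on one distinguished large index from the constraint $\tau_k\in I$. The step you flag as the main obstacle is carried out in the paper via an explicit decomposition of itineraries into \emph{compatible cylinders} (in the $A_l$ case, $m(l)$ one-cylinders at the large indices interleaved with $m(l)+1$ complementary cylinders, with \eqref{GM-l} applied to the whole string), and the $A^-$ bound does invoke one application of \eqref{GM} to decouple the two halves of the trajectory before applying Markov's inequality and \eqref{AntiConc}, so ``no quasi-independence beyond linearity'' is a slight overstatement there.
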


\begin{proof}
Note that \eqref{TailDensity} holds with $\alpha=\frac{1}{r}$ (\cite{LSV}).
We follow the approach of Proposition \ref{PrLLTLDDensity}. In particular, we shall
use the notation of Proposition \ref{PrLLTLDDensity}.
We need to show \eqref{ProbA+}, \eqref{ProbAl}, and \eqref{ProbA-}.

We will say that a cylinder $\cD$ of length $1$ is
{\em high} if $\tau>t/3$ on $\cD.$ We say that the cylinders $\cC_1$ and $\cC_2$ of lengths $m_1$ and $m_2$
respectively
are compatible with
$\cD$ if $m_1+m_2=k-1$ and there is a point 
$x\in \cC_1\bigcap f^{-m_1} \cD\bigcap f^{-m_1 -1} \cC_2$ such that 
$\tau_k(x)\in I.$ Thus
$$ \mu(A^+)\leq \sum_{\cC_1, \cD, \cC_2} \mu\left(\cC_1\bigcap f^{-m_1} \cD\bigcap f^{-m_1 -1} \cC_2\right) $$
where the sum is over compatible cylinders. By Gibbs-Markov property
$$ \mu(A^+)\leq K^2 \sum_{\cC_1, \cD, \cC_2} \mu(\cC_1)\mu(\cD)\mu(\cC_2). $$
Next given $\cC_1, \cC_2, I$ there is an interval $\hI$ of bounded size such that if 
$\cC_1, \cD,$ and $\cC_2$ are compatible, then $\tau(x)\in \hI$ for each $x\in \cD.$
($\hI$ maybe empty if there are no high cylinders compatible with $\cC_1$ and $\cC_2$).
Therefore for each $\cC_1, \cC_2$ 
$$ \sum_{\cD: \cC_1, \cD, \cC_2\text{ are compatible}}\mu(\cD)\leq \frac{C}{t^{1+\alpha}}. $$
On the other hand for each $m_1, m_2$, 
$\displaystyle \sum_{ \cC_j: \text{length}(\cC_j)=m_j} \mu(\cC_j)=1.$
Since $m_1$ can take $k-1$ possible values we get
$$ \mu(A^+)\leq \frac{C k}{t^{1+\alpha}}$$
proving \eqref{ProbA+}. The proof of \eqref{ProbAl} is similar except instead of using three cylinders
to describe the itinerary of $x$ we use $2 m(l)+1$ cylinders, where 
$\cD_1, \cD_2, \dots \cD_{ m(l)}$ are cylinders of length $1$ such that 
$\tau\in \left[\frac{t}{L^{l +1}} , \frac{t}{L^{l}}\right]$ on $\cD_j$ and $\cC_1\dots \cC_{ m(l)+1}$ 
are complimentary cylinders.

It remains to establish \eqref{ProbA-}. We have
$$ \mu(A^-)\leq \sum_{\cC_1, \cC_2} K \mu(\cC_1) \mu(\cC_2) $$
where the sum is over all cylinders of length $k/2$ such that 
$\tau(f^j x) \leq 2L^r k^{1/\alpha}$ for all $x$ in $\cC_1\bigcap f^{-k/2} \cC_2$ and all $j<k$
and either $\tau_{k/2}(x)>t/100$ for all $x\in \cC_1$ or  
$\tau_{k/2}(x)>t/100$ for all $x\in \cC_2.$  To estimate this sum
we note that 
$$\sum_{\cC_1: \;[\sum_{j=0}^{k/2-1} (\tau\circ f^j)\times \chi_{[0, 2 L^r k^{1/\alpha}]}(\tau \circ f^j)]>t/100
\text{ on }\cC_1} \mu(\cC_1)
\leq \frac{C(L, r) k^{1/\alpha}}{t} $$ 
by Markov inequality. On the other hand \eqref{AntiConc} shows thar for each $\cC_1$ 
$$ \sum_{\cC_2: \tau_k(x)\in I\text{ for some }x\in \cC_1\cap f^{-k/2} \cC_2} \mu(\cC_2)\leq
\frac{C}{k^{1/\alpha}} .$$
This shows that 
the contribution of terms where 
$\tau_{k/2}>t/100$ on $\cC_1$ is $O\left(t^{-1}\right).$
Likewise the contribution of terms where 
$\tau_{k/2}>t/100$ on $\cC_2$ is $O\left(t^{-1}\right).$
This proves \eqref{ProbA-}.
\end{proof}

\subsection{Mixing away from the origin.}

Here we deduce Theorem \ref{propLSV} from Proposition \ref{thm1}.
Define $\mathcal A = \iota^{-1} (\tilde{\mathcal A}) \subset \Omega$
and $\mathcal B = \iota^{-1} (\tilde{ \mathcal B}) \subset \Omega$. 
Since $\iota$ is a homomorphism, we have
$\tilde  \nu(\tilde{\cA}\cap \tilde g_{-t} \tilde \cB) 
=\nu({\cA}\cap  g_{-t}  \cB)$ and 
$\tilde{\nu}(\tilde \cA) = {\nu}( \cA)$, $\tilde{\nu}(\tilde \cB) = {\nu}( \cB)$.
It is easy to check that for any $\tilde {\mathcal E} \subset \tilde \Omega$ with 
$\tilde \nu (\partial \tilde{\mathcal E})
= 0$ (w.r.t. the usual product topology on $\tilde \Omega$) and 
for $\mathcal E = \iota^{-1} \tilde{\mathcal E}$, we have $\mu (\partial \mathcal E)=0$ 
(w.r.t. the product topology on $\Omega$ where the topology in the base is defined by $d$). 
Unfortunately, $\mathcal A$ and $\mathcal B$ are not
subsets of $\Omega_{\leq M}$ in general. Indeed $\Omega_{\leq M}$ is
defined by the requirement that the {\em backward} return time to the base $X$
is bounded, while the condition $\tA, \tB \subset [\eps, 1]$
in Theorem \ref{propLSV} 
allows us to bound {\em forward} return time to $X.$
Since our system is non-invertible the forward and backlward directions
play different roles.
Thus we cannot apply Proposition \ref{thm1} directly and an additional analysis
is required. 

\begin{proof}[Proof of Theorem \ref{propLSV}]

By Lemma \ref{lem:rational}, $\tau$ is irrational.

Let $y_0 = 1$, and $y_{n+1}$ be the preimage of $y_n$ in $[0,1/2]$. Let $x_{n+1}$ be the preimage of $y_n$
in $(1/2, 1]$. The intervals $X_n = (x_{n+1}, x_n] $ form a partition of $X$. In fact, $X_n$ coincides with the 
$1$-cylinder $\{x \in X:  R(x) = n\}$. Furthermore, the intervals $Y_n = (y_{n+1}, y_n]$, $n \geq 1$ form a partition 
of $(0, 1/2]$. Note that $Y_0 = (1/2,1] = X$ (up to measure zero). For $n \geq 0$ let
    $$\tilde \Omega^n = \{ (x,s): x \in Y_n, 0 \leq s \leq \tilde \tau (x)\}, \quad
    \hOmega^N=\bigcup_{n=0}^N \tOmega^n. $$
Since  $\tA$ and  $\tB$ are disjoint from $[0, \varepsilon)$, there is a finite $N=N(\eps)$ so that
  $\tcA, \tcB\subset \hOmega^N.$ So it is sufficient to prove  \eqref{IMMix2} for
  $\tcA, \tcB\subset\hOmega^N$ with $\tnu(\tcA)=0,$ $\tnu(\tcB)=0.$ This will be done in three steps.

  \medskip \noindent
  {\bf Step 1: \eqref{IMMix2} holds for $\tcA, \tcB\subset\hOmega^0$.}

  Indeed,
   in this case $\cA, \cB\in \Omega_{\leq M}$ with $M= \| \tilde \tau \|_{\infty}$,
   so the result follows from
  Proposition \ref{thm1}.

  \medskip\noindent
  {\bf Step 2: \eqref{IMMix2} holds if $\tcA\subset \hOmega^0,$ and $\tcB\subset\hOmega^N$ for some $N.$}

  The proof is by induction on $N.$ The base of induction was done at Step 1. So let us assume that the result holds
  for $\hOmega^{N-1}$ and prove it for $\hOmega^N.$ Let $\tcB_N=\tcB\cap \tOmega^N.$ Since
  $\tcB-\tcB_N\subset \hOmega^{N-1}$ it is enouth to show that the pair $(\tcA, \tcB_N)$ satisfies
  \eqref{IMMix2}.

  Partition $\tcB_N$ into subsets $\tcB_{N,l}$ of small diameter $\delta.$ It suffices to check that for each $l$,
  the pair $(\tcA, \tcB_{N,l})$ satisfies \eqref{IMMix2}. Let
  $$t_l^-=\sup_{(x,s)\in \tcB_{N,l}} \min(t>0: \tg_t(x,s)\in \tOmega^{N-1}), \quad
  \tcB_{N,l}^*=g_{t_l^-} \tcB_{N,l}. $$
      If $\delta$ is sufficiently small, then $\tcB_{N,l}^* \subset \tOmega^{N-1}$
      and the diameter of $\tcB_{N,l}^*$ is less than $\tilde \tau_{min} / 2$. Consequently, the preimage
      of $\tcB_{N,l}^*$ under $\tg_{\mathfrak s}$ is the disjoint union of two sets:
      $\tcB_{N,l}' \subset \tOmega^{N}$ and $\tcB_{N,l}'' \subset \tOmega^{0}$,
      where 
      $$\mathfrak s = \sup \{ s < \tilde \tau_{min} / 2: \exists x \in Y_{N-1}:
      (x,s) \in  \tcB_{N,l}^*\}.$$ 
      Furthermore,
      the preimage of $\tcB_{N,l}' $ under $\tg_{t_l^ - - \mathfrak s}$ is $\tcB_{N,l}$.

      Thus
      $$\{(x,s)\in \tcA: g_t(x,s)\in \tcB_{N,l}\}=$$
      $$\{(x,s)\in \tcA: g_{t+t_l^-}(x,s)\in \tcB^*_{N,l}\}\setminus
      \{(x,s)\in \tcA: g_{t+t_l^- - \mathfrak s}(x,s)\in \tcB_{N,l}''\}. $$
      By the inductive hypothesis, the RHS is asymptotic to
      $$ \hc t^{1/r-1} \tnu(\tcA) \left[\tnu(\tcB^*_{N,l})-\tnu(\cB_{N,l}'')\right]. $$
      Since $\tg$ is measure preserving, we have
$$ \tnu(\tcB^*_{N,l})-\tnu(\cB_{N,l}'')=\tnu(\tcB_{N,l}') = \tnu(\tcB_{N,l}),$$
      which proves \eqref{IMMix2}.

\medskip\noindent
  {\bf Step 3: \eqref{IMMix2} holds for $\tcA, \tcB\subset\hOmega^N$ for arbitrary $N$.}      
  It suffices to show that for any fixed $\xi>0$, we have
\begin{equation}
\label{Squeeze}
  \hc t^{1/r-1} \nu(\tcA) \nu(\tcB) (1-\xi)\leq
  \tnu(\tcA\cap \tg_{-t} \tcB)\leq \hc t^{1/r-1} \nu(\tcA) \nu(\tcB) (1+\xi)
\end{equation}  
  provided that $t$ is large enough.

  To establish \eqref{Squeeze}, we partition $\tcA$ into sets $\tcA_l$ of small diameter $\delta.$
Let  
$$t_l^+=\sup_{(x,s)\in \tcA_{l}} \min(t>0: \tg_t(x,s)\in \tOmega^{0}), \quad
\tcA_{l}^*=g^{t_l^+} \tcA_{l}. $$
    By bounded distortion, given $\xi$, we can find $\delta(\xi)$ such that if $\delta<\delta(\xi)$,
      then the Jacobian J(x,s) of $\tg_{t_l^+}$ satisfies 
      $$ \left(1-\frac{\xi}{2}\right) \frac{\tnu(\tcA_l^*)}{\tnu(\tcA_l)}
      \leq J(x,s) \leq
      \left(1+\frac{\xi}{2}\right) \frac{\tnu(\tcA_l^*)}{\tnu(\tcA_l)}. $$
Hence
$$ 
\frac{\tnu(\tcA_l)}
{\left(1+\frac{\xi}{2}\right)\tnu(\tcA_l^*)}      
\tnu(\tcA_l^*\cap g_{-(t-t_l^+)} \tcB)
\leq
\tnu(\tcA_l\cap g_{-t} \tcB)
\leq
\frac{\tnu(\tcA_l)}{\left(1-\frac{\xi}{2}\right) \tnu(\tcA_l^*)}      
\tnu(\tcA^*_l\cap g_{-(t-t_l^+)} \tcB). $$
By Step 2,
$$ t^{1-1/r} \tnu(\tcA_l^*\cap g_{-(t-t_l^+)} \tcB)\to \hc \tnu(\tcA_l^*) \tnu(\tcB). $$
Therefore for large $t$
$$  \hc t^{1/r-1} \nu(\tcA_l) \nu(\tcB) (1-\xi)\leq
  \tnu(\tcA_l \cap \tg_{-t} \tcB)\leq \hc t^{1/r-1} \nu(\tcA_l) \nu(\tcB) (1+\xi). $$
  Summing over $l$ we obtain \eqref{Squeeze} completing the proof of the theorem.
\end{proof}


\begin{thebibliography}{999}
\bibitem{AD} Aaronson J., Denker M. {\it Local limit theorems for partial sums of stationary sequences generated by Gibbs-Markov maps,} Stoch. Dyn. {\bf 1} (2001) 193--237.

\bibitem{AN} Aaronson, J., Nakada, H.:
On multiple recurrence and other properties of 'nice' infinite measure-preserving
transformations,
{\it Ergodic Theory and Dynamical Systems} {\bf 37} 1345-1368 (2017).

\bibitem{Bil} Billingsley P. {\em Probability and measure,} 3d edition. Wiley Series in Prob. \& Math. Stat. John Wiley \& Sons, New York, 1995. xiv+593 pp.

\bibitem{BGT} 
Bingham N. H., Goldie C. M., Teugels J. L. {\em Regular variation,} 
Encyclopedia Math. \& Appl. {\bf 27} (1987)  xx+491 pp.
Cambridge Univ. Press, Cambridge.

\bibitem{BT} Bruin H., Terhesiu D.
{\em Regular variation and rates of mixing for infinite measure preserving almost Anosov diffeomorphisms,} preprint arXiv:1707.09221.

\bibitem{CD} Caravenna F., Doney R.
  {\em Local large deviations and the strong renewal theorem,}
  preprint arXiv:1612.07635.



\bibitem{D52} Darling, D. A., 
{\it The influence of the maximum term in the addition of independent random variables}
Trans. AMS {\bf 73} (1952), 95--107.

\bibitem{DN1} Dolgopyat, D., N\'andori, P.,
  {\em Non equilibrium density profiles in Lorentz tubes
    with thermostated boundaries,}  
Comm. Pure \& Applied Math. {\bf 69} (2016) 649--692.

\bibitem{DN2} Dolgopyat, D., N\'andori, P.,
{\em Mixing and the local central limit theorem for certain hyperbolic flows,} in preparation.

\bibitem{DSV} Dolgopyat D., Sz\'asz D., Varj\'u, T. {\em Recurrence properties of planar Lorentz process,} Duke Math. J. {\bf 142} (2008) 241--281.

\bibitem{Don}  Doney R. A. {\em One-sided local large deviation and renewal theorems in the case of infinite mean,}
  Probab. Theory Related Fields {\bf 107} (1997) 451--465.
  
\bibitem{Er} Erickson K. B.
{\em Strong renewal theorems with infinite mean,} 
Trans. AMS {\bf 151} (1970) 263--291. 

\bibitem{Fr} Friedman N. A., {\em Mixing transformations in an infinite
  measure space,} 
  Adv. Math. Suppl. Stud. {\bf 2} (1978) 167--184,
  Academic Press, New York-London.

  
\bibitem{GL}
Garsia A., Lamperti J.
{\it A discrete renewal theorem with infinite mean,} 
Comment. Math. Helv. {\bf 37} (1962/1963) 221--234. 



\bibitem{G} Gou\"ezel S. 
{\it Correlation asymptotics from large deviations in dynamical systems with infinite measure,}
  Colloq. Math. {\bf 125} (2011) 193--212.
  
\bibitem{G-Reg} Gou\"ezel S.   
{\it  Regularity of coboundaries for nonuniformly expanding Markov maps,} 
Proc. AMS {\bf 134} (2006) 391--401. 

\bibitem{H} Hopf E. {\em Ergodentheorie,} Springer, Berlin, 1937.

\bibitem{IL}  Ibragimov I. A., Linnik Yu. V. {\em Independent and stationary sequences of random variables,}
Wolters-Noordhoff Publishing, Groningen, 1971. 443 pp.

\bibitem{Kr} Krickeberg K.
{\em Strong mixing properties of Markov chains with infinite invariant measure,} Proc. Fifth Berkeley Sympos. Math. Statist. Prob. Vol. II, pp. 431--446 (1967) Univ. California Press, Berkeley, CA. 

\bibitem{L1}  Lenci M. {\em On infinite-volume mixing,} Comm. Math. Phys. 
{\bf 298} (2010) 485--514.



\bibitem{LSV}  Liverani C., Saussol B., Vaienti S. {\it A probabilistic approach to intermittency,} 
Erg. Th. Dynam. Sys. {\bf 19} (1999) 671--685.

\bibitem{LT}
Liverani C., Terhesiu, D. {\em Mixing for some non-uniformly hyperbolic systems,}
Ann. Henri Poincare {\bf 17} (2016) 179--226.
\bibitem{Mel-I} Melbourne I. {\em Mixing for invertible dynamical systems with infinite measure,}
Stoch. Dyn. {\bf 15} (2015) paper 1550012.
\bibitem{MT1} Melbourne I., Terhesiu D.
{\em Operator renewal theory and mixing rates for dynamical systems with infinite measure,} 
Invent. Math. {\bf 189} (2012) 61--110.
\bibitem{MT2} Melbourne I., Terhesiu D.
{\em Operator renewal theory for continuous time dynamical
systems with finite and infinite measure,} 
Monatsh. Math. {\bf 182} (2017) 377--431. 
\bibitem{MT3} Melbourne I., Terhesiu D.
  {\em Renewal theorems and mixing for non Markov flows with infinite measure,}
  preprint arXiv:1701.08440.

\bibitem{Pa} Papangelou F. {\em  Strong ratio limits,
  R-recurrence and mixing properties of discrete parameter Markov processes,}
  Z. Wahrscheinlichkeitstheorie und Verw. Gebiete {\bf 8} (1967) 259--297.


\bibitem{St} Stone C. J. {\em A local limit theorem for nonlattice
  multi-dimensional distribution functions,} 
Ann. Math. Statist. {\bf 36} (1965) 546--551. 

\bibitem{T1}  Terhesiu D. {\em Improved mixing rates for infinite measure preserving systems,} 
Ergodic Theory Dynam. Systems {\bf 15} (2015) 585--614.

\bibitem{Tk} Tkachuk S.G. {\em Limit theorems for sums of independent random variables belonging to the domain of attraction of a stable law,} Candidate's dissertation, Tashent, 1977.
\end{thebibliography}
\end{document}